\newcommand {\mat}[1]{\left[\begin{array}{#1}}
\newcommand {\rix}          {\end{array}\right]}
\newcommand {\eq}       [1] {\begin{equation}\label{#1}}
\newcommand {\en}           {\end{equation}}
\newcommand {\mpar} [1] {\marginpar{\fussy\tiny #1}}
\renewcommand {\mpar} [1] {}
\newcommand {\rank}       {\mathop{\rm rank}\nolimits}
\begin{document}


 \title*{Structure-preserving Interpolatory Model Reduction
 for Port-Hamiltonian Differential-Algebraic Systems}
\titlerunning{Structure-preserving Interpolatory Model Reduction for \textsf{pHDAE}s}
\author{C.~A. Beattie \and S. Gugercin \and V. Mehrmann}

\institute{C.~A. Beattie \at Department of Mathematics, Virginia Tech, Blacksburg, VA 24061-0123, USA, \\ \email{beattie@vt.edu}
\and S. Gugercin \at Department of Mathematics, Virginia Tech, Blacksburg, VA 24061-0123, USA,\\  \email{gugercin@vt.edu}
\and V. Mehrmann \at Institut f\"{u}r Mathematik, Technische Universit\"{a}t Berlin, D-10829, Berlin, Germany \\  \email{mehrmann@math.tu-berlin.de }
}
\maketitle

\vspace{-2.5cm}
\begin{center}
\emph{Dedicated to Thanos Antoulas on the occasion of his 70th birthday}
\end{center}
\vspace{1ex}

\abstract{
We examine interpolatory model reduction methods that are well-suited for treating large scale port-Hamiltonian differential-algebraic systems in a way that is able to preserve and take advantage of the underlying structural features of the system.   We introduce approaches that
 incorporate regularization together with a prudent selection of interpolation data. We focus on linear time-invariant systems and
 present a systematic treatment of a variety of model classes that include combinations of index-$1$ and index-$2$ systems,  describing in particular how constraints may be represented in the transfer function so that the polynomial part can be preserved with interpolatory methods. We propose an algorithm to generate effective interpolation data and illustrate its effectiveness on a 
 numerical example.
}

\vspace{-1ex}
\section{Introduction}\label{sec:intro}
Port-Hamiltonian ( \textsf{pH} ) systems are network-based models that arise within a modeling framework in which a physical system model is decomposed into hierarchies of submodels that are interconnected principally through the exchange of energy.  At some level the submodels typically reflect one of a variety of core modeling paradigms that describe  phenomenological aspects of the dynamics having different physical character, such as e.g. electrical, thermodynamic, or mechanical.   The  \textsf{pH}  framework is able to knit together submodels featuring  dramatically different physics through a disciplined focus on energy flux as a principal mode of system interconnection;  \textsf{pH}  structure is inherited via power conserving interconnection, and a variety of physical properties and functional constraints (e.g., 
passivity and energy and momentum conservation) are encoded directly into the structure of the model equations \cite{BeaMXZ18,Sch13}.  Interconnection of submodels often creates further constraints on system behavior and evolution, originating as conservation laws (e.g., Kirchoff's laws or mass balance), or as position and velocity limitations in mechanical systems.
As a result system  models
are often naturally posed as combinations of dynamical system equations and algebraic constraint equations, i.e. as 
\emph{Port-Hamiltonian descriptor systems} or \emph{port-Hamiltonian differential-algebraic equations} (\textsf{pHDAE}).

When a \textsf{pHDAE}  system is linearized around a stationary solution, one obtains a linear time-invariant \textsf{pHDAE}  with specially structured coefficient matrices, see \cite{BeaMXZ18}.
Although the approach we develop here can be extended easily to more general settings,
we narrow our focus to a particular formulation 
as one of the simpler among alternative formulations
laid out in \cite{BeaMXZ18,MehMW18}.
\begin{definition}\label{pHDAEDEF} A linear time-invariant \textsf{DAE} system of the form
\begin{equation}\label{pHDAE}
\begin{aligned}
\bE\dot{\bx}&=(\bJ-\bR)\, \bx+(\bB-\bP)\,\bu,\quad  \bx(t_0)=0,\\
\by&=(\bB+\bP)^T\,\bx+(\bS+\bN)\,\bu,
\end{aligned}
\end{equation}
with $\bE$, $\bJ$, $\bR\in\IR^{n \times n}$, $\bB, \bP\in\IR^{n \times m}$, $\bS=\bS^T$, $\bN=-\bN^T\in\IR^{m \times m}$, on a compact interval $\mathbb I\subset \IR$ is a \textsf{pHDAE} system if the following properties are satisfied:
\begin{enumerate}
\item The differential-algebraic operator
\begin{align*}
\bE\frac{d}{dt}-\bJ :   {\mathcal C}^1(\mathbb{I},\IR^n)\rightarrow  {\mathcal C}^0(\mathbb{I},\mathbb{R}^n)
\end{align*}
is \emph{skew-adjoint}, i.e., we have that
$\bJ^T=-\bJ$ and $\bE=\bE^T$,
\item $\bE$ is positive semidefinite, i.e., $\bE\geq 0$, and
\item the \emph{passivity} matrix

\vspace{-4ex}
\begin{align*}
\bW=\begin{bmatrix}\bR & \bP\\ \bP^T & \bS\end{bmatrix}\in\mathbb{R}^{(n+m)\times(n+m)}
\end{align*}
is symmetric positive semi-definite, i.e., $\bW=\bW^T\geq 0$.
\end{enumerate}
The associated quadratic \emph{Hamiltonian function} ${\mathcal H}:\mathbb{R}^n\rightarrow\mathbb{R}$ of the system is 
${\mathcal H}(\bx)=\frac{1}{2}\bx^T\bE\bx.$
\end{definition}
In Definition~\ref{pHDAEDEF}, the Hessian matrix of ${\mathcal H}(\bx)$ is the matrix $\bE$ which is also referred to as \emph{energy matrix};
$\bR$ is the \emph{dissipation matrix}; $\bJ$ is the \emph{structure matrix} describing the energy flux among internal energy storage elements; and $\bB  \pm \bP$ are \emph{port matrices} describing how energy enters and leaves the system.  $\bS$ and $\bN$ are matrices associated with a \emph{direct feed-through} from input $\bu$ to output $\by$.
If the system has a 
solution $\bx\in \mathcal{C}^1(\mathbb{I},\mathbb{R}^n)$ in $\mathbb{I}$ when for 
 a given input function $\bu$, then the dissipation inequality
\begin{align*}
\frac{d}{dt}{\mathcal H}(\bx)=\bu^T\by-\begin{bmatrix}\bx\\\bu\end{bmatrix}^T\bW\begin{bmatrix}\bx\\\bu\end{bmatrix}
\end{align*}
must hold, i.e.,  the system is both \emph{passive} and \emph{Lyapunov stable}, as $\mathcal H$ defines a \emph{Lyapunov function}, see \cite{BeaMXZ18}.

In practice, network-based automated modeling tools such as
{\sc modelica}, {\sc Matlab/Simu\-link}, or
\textsf{20-sim}\footnote{\texttt{https://www.modelica.org/}, \texttt{http://www.mathworks.com}, \texttt{https://www.20sim.com}} each may produce system models that are overdetermined as a consequence of redundant modeling.
Thus, the automated generation of a system model usually must be followed by a reformulation and regularization procedure (that preserves the \textsf{pH}  structure and  makes the system model compatible with standard simulation, control and optimization tools.

\textsf{pHDAE} models may be very large and complex, e.g., models that arise from semi-discretized (in space) continuum models in hydrodynamics \cite{EggKLMM18,
HeiSS08, Sty06}, or mechanics \cite{
MehS05}.  In such cases, model reduction techniques are necessary to apply control and optimization methods. Preservation of the \textsf{pH}, the constraint, and the interconnection  structure is necessary to maintain model integrity.
For liner time -invariant \textsf{pH}  systems with 
positive-definite $\bE$, such
methods are well-developed, including tangential interpolation approaches \cite{GugPBS12,
GugPBS09}, moment matching \cite{PolS10,
PolS11,WolLEK10} as well as effort and flow constraint reduction methods \cite{PolS12}.  Interpolatory approaches have been extended to nonlinear  \textsf{pH}  systems as well, in \cite{BG2011pHstructPresMOR} and \cite{ChaBG16}.

In the case of singular $\bE$, only recently in \cite{EggKLMM18,HauMM19_ppt} first model reduction techniques have been introduced that preserve the \textsf{pH}  structure, while for unstructured DAEs  such constraint preserving methods  have been introduced in
\cite{GugSW13,HeiSS08,MehS05,Sty06}.

We discuss structure preserving model reduction methods incorporating both regularization and interpolation for linear \textsf{pHDAE}  systems having the form (\ref{pHDAE}). For different model classes we describe how  constraints are represented in the transfer function, and how the polynomial part can be preserved with interpolatory model reduction methods.  The key step identifies, as in \cite{BeaMXZ18,MehM19_ppt
}, all the redundancies as well as both explicit and implicit system constraints, partitioning the system equations into redundant, algebraic, and dynamic parts.  Only the dynamic part is then reduced in a way that preserves the structure.

In \S\ref{sec:regularization}, we consider \textsf{pHDAE}s of the form \eqref{pHDAE}
and note important simplifications of the general regularization procedure.
Structure preserving model reduction of \textsf{pHDAE}s  using interpolatory methods is
discussed first in general terms in  \S\ref{sec:intmodred}, and then in more detail for several specific model structures.
We propose an algorithm to generate effective interpolation data in \S\ref{sec:imp} followed by a numerical example that demonstrates its effectiveness.

\section{General Differential-Algebraic Systems} \label{sec:dess}
In this section we recall some basic properties  of general linear constant coefficient \textsf{DAE} systems
\begin{equation}\label{DAE}
\begin{aligned}
                \bE \dot{\bx} & = \bA \bx + \bB \bu,\ \bx(t_0)=0,\\
                \by         & = \bC \bx+  \bD \bu,
\end{aligned}
\end{equation}
where
\(\bE,\,\bA \in \IR^{n \times n}\),
\(\bB \in \IR^{n \times m}\),
\(\bC \in \IR^{p \times n}\),
\(\bD \in \IR^{p \times m}\); for details, see, e.g., \cite{BunBMN99}.

The matrix pencil \(\lambda \bE -\bA\) is
said to be \emph{regular} if
\(\det(\lambda \bE - \bA) \ne 0\) for some
\(\lambda  \in \IC\). For regular pencils, the \emph{finite eigenvalues} are
the values $\lambda\in \IC$ for which \(\det(\lambda \bE - \bA ) = 0\).
If the \emph{reversed pencil} \(\lambda \bA - \bE \) has the eigenvalue $0$ then  this is the \emph{infinite eigenvalue} of $\lambda \bE - \bA $.

With  zero initial conditions $\bx(t_0)=0$ as in \eqref{DAE} and a regular pencil \(\lambda \bE - \bA\), we obtain, in the frequency domain, the rational \emph{transfer function}
$ \bH(s)=\bC(s\bE-\bA)^{-1}\bB+\bD$,
which maps the Laplace transform of the input functions $\bu$ to the Laplace transform of the output function $\by$.  If the transfer function is  written as
\[
\bH(s)=\bG(s)+\bP(s),
\]
where $\bG(s)$ is a proper rational matrix function and $\bP(s)$ is a polynomial matrix function, then the finite eigenvalues are the poles
of the proper rational part, while infinite eigenvalues belong to the polynomial part.

In the following, a matrix with
orthonormal columns spanning the right nullspace of the matrix $\bM$
is denoted by $S_{\infty}(\bM)$ and a matrix with orthonormal
columns spanning the left nullspace of $\bM$ by $T_{\infty}(\bM)$.

Regular pencils can be analyzed  via the Weierstra\ss\   Canonical Form;
see, e.g., \cite{Gan59b}.
The {\em index} $\nu$ of the pencil $\lambda \bE-\bA$ is the size of the largest block associated with the eigenvalue $\infty$
in the Weierstra\ss\ canonical form
and if \(\bE\) is nonsingular, then the pencil has index $0$.

To analyze general \textsf{DAE} systems and also to understand the properties of the transfer function, some controllability and observability conditions are needed, see e.g. \cite{BunBMN99,Dai89}. A system that satisfies
${\rm C1}:\ \rank [ \lambda \bE - \bA,\, \bB ] = n \textrm{ for all } \lambda \in \IC$
and
${\rm C2}:\          \rank [ \bE,\, \bA S_\infty(\bE),\, \bB ] = n$
is called \emph{strongly controllable}\cite{BunBMN99}.  If a system satisfies analogous observability conditions 
${\rm O1}:\  \rank\mat{cc}\lambda \bE^T-\bA^T & \bC^T\rix = n \textrm{ for all }\lambda \in \IC$, and
${\rm O2}:\ \rank \mat{ccc} \bE^T & \bA^T T_{\infty}(\bE)  & \bC^T \rix = n$,
then it is called \emph{strongly observable}.
If a system is strongly controllable and strongly observable then it is called \emph{minimal}.

Conditions ({C1}) and ({C2}) are preserved under non-singular
equivalence transformations as well as under state and output feedback.
Note, however, that regularity or non-regularity of the pencil, the index, and the polynomial part of the transfer function are in general not preserved under state or output feedback. For systems that satisfy~({C2}), there exists a suitable linear state feedback matrix $\bF_1$ that such that $\lambda \bE-(\bA+\bB \bF_1))$ is regular and of index at most one. Also if conditions~({C2}) and~({O2}) hold, then there exists a linear output feedback matrix $\bF_2$ so that the pencil $\lambda \bE-(\bA+\bB\bF_2 \bC)$ has this property, see \cite{BunBMN99}.

\section{Regularization of \textsf{PHDAE} Systems}\label{sec:regularization}
In general, it cannot be guaranteed that a system, generated either from a realization procedure or an automated modeling procedure, has a regular pencil $\lambda \bE-\bA$. Therefore, typically a regularization procedure has to be applied, see \cite{BinMMS15_ppt,CamKM12,KunM06}.
For \textsf{pHDAE}s  the structure helps simplify theses general regularization procedures significantly, since
the pencil $\lambda \bE-(\bJ-\bR)$ associated with a free \emph{dissipative Hamiltonian} \textsf{DAE} system (i.e.,  $\bu=0$) has many nice properties \cite{MehMW18}: The index of the system is at most two; all eigenvalues are in the closed left half plane; and all eigenvalues on the imaginary axis are semi-simple (except for possibly the eigenvalue $0$, which may have Jordan blocks of size at most two). Furthermore, a singular pencil can only occur when $\bE,\bJ,\bR$ have a common nullspace; see \cite{MehMW19_ppt}. Therefore, if one is able to efficiently compute this common nullspace, it is possible to remove the singular part. 
\begin{lemma}\label{lem:regul} For the \textsf{pHDAE} in
 \eqref{pHDAE}, there exists an orthogonal basis transformation matrix $\bV\in \IR^{n \times n}$
 such that in the new variable $\tbx= \mat{ccc} \tbx_1^{\,T} & \tbx_2^{\,T} & \tbx_3^{\,T}\rix^T = \bV^T \bx$,
 the system has the form
 \begin{eqnarray}\label{structstc}
 \mat{ccc} \bE_1 & 0 & 0 \\
 0 & 0 & 0\\ 0 & 0 & 0 \rix \mat{c} \dot \tbx_1 \\ \dot \tbx_2 \\ \dot \tbx_3 \rix & =&
 \mat{ccc} \bJ_1-\bR_1 & 0 & 0\\ 0 & 0 & 0\\ 0 & 0 & 0\rix\mat{c} \tbx_1 \\ \tbx_2 \\ \tbx_3 \rix
 + \mat{c} \bB_1-\bP_1 \\ \bB_2 \\ 0 \rix \bu,\\
 \by&=&  \mat{ccc} (\bB_1+\bP_1)^T & \bB_2^T  & 0\rix\mat{c} \tbx_1 \\ \tbx_2 \\ \tbx_3 \rix +(\bS+\bN)\bu,
 \nonumber\label{structout}
  \end{eqnarray}
where $\lambda \bE_1-(\bJ_1-\bR_1)$ is regular and $\bB_2$ has full row rank. Also, the subsystem
 \begin{eqnarray}\label{subsystemstate}
 \mat{cc} \bE_1 & 0  \\
 0 & 0   \rix \mat{c} \dot \tbx_1 \\ \dot \tbx_2  \rix & =&
 \mat{cc} \bJ_1-\bR_1 & 0 \\ 0 & 0  \rix\mat{c} \tbx_1 \\ \tbx_2  \rix
 + \mat{c} \bB_1-\bP_1 \\ \bB_2  \rix \bu,\\
 \by&=&  \mat{cc} (\bB_1+\bP_1)^T & \bB_2^T  \rix\mat{c} \tbx_1 \\ \tbx_2  \rix +(\bS+\bN)\bu,
 \nonumber\label{subsystemout}
  \end{eqnarray}
that is obtained by removing the third equation and the variable $\bx_3$ is still a  \textsf{pHDAE}.
  \end{lemma}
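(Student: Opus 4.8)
The plan is to isolate, by a single orthogonal congruence, the common kernel of $\bE$, $\bJ$ and $\bR$ --- which by the characterization quoted above from \cite{MehMW19_ppt} is exactly what makes $\lambda\bE-(\bJ-\bR)$ singular --- and then to split that kernel further according to whether the columns of $\bB$ also vanish on it. Concretely, I would set $\mathcal{N}:=\ker\bE\cap\ker\bJ\cap\ker\bR$ and $\mathcal{N}_0:=\mathcal{N}\cap\ker\bB^T$, take the columns of $\bV_3$ to be an orthonormal basis of $\mathcal{N}_0$, those of $\bV_2$ an orthonormal basis of the orthogonal complement of $\mathcal{N}_0$ in $\mathcal{N}$, and those of $\bV_1$ an orthonormal basis of $\mathcal{N}^\perp$, so that $\bV=\mat{ccc}\bV_1 & \bV_2 & \bV_3\rix$ is orthogonal; then I use $\bx=\bV\tbx$ as the state transformation and set $\bE_1=\bV_1^T\bE\bV_1$, $\bJ_1=\bV_1^T\bJ\bV_1$, $\bR_1=\bV_1^T\bR\bV_1$, $\bB_1=\bV_1^T\bB$, $\bP_1=\bV_1^T\bP$, $\bB_2=\bV_2^T\bB$.

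The first task is then to read off \eqref{structstc}--\eqref{structout}. Since the columns of $\bV_2$ and $\bV_3$ lie in $\ker\bE\cap\ker\bJ\cap\ker\bR$, the second and third block rows and columns of $\bV^T\bE\bV$, $\bV^T\bJ\bV$ and $\bV^T\bR\bV$ all vanish (the columns because $\bE\bV_j=\bJ\bV_j=\bR\bV_j=0$ for $j=2,3$, the rows then by $\bE=\bE^T$, $\bR=\bR^T$, $\bJ=-\bJ^T$), which gives the stated $(1,1)$-block forms with $\bE_1\ge 0$, $\bJ_1$ skew and $\bR_1\ge 0$. For the port terms I would invoke $\bW\ge 0$: if $\bR v=0$ then $\mat{c}v\\0\rix^T\bW\mat{c}v\\0\rix=v^T\bR v=0$, hence $\bW\mat{c}v\\0\rix=0$ and in particular $\bP^T v=0$; thus $\mathcal{N}\subseteq\ker\bP^T$, so $\bV_2^T\bP=\bV_3^T\bP=0$, while $\bV_3^T\bB=0$ holds by construction of $\mathcal{N}_0$. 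This yields the input column $\mat{c}\bB_1-\bP_1\\\bB_2\\0\rix$ and output row $\mat{ccc}(\bB_1+\bP_1)^T & \bB_2^T & 0\rix$; moreover $\bB_2$ has full row rank, because $y^T\bB_2=0$ forces $\bV_2 y\in\mathcal{N}\cap\ker\bB^T=\mathcal{N}_0$, which is orthogonal to the column space of $\bV_2$, so $\bV_2 y=0$ and $y=0$.

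The step I expect to require the most care is the regularity of $\lambda\bE_1-(\bJ_1-\bR_1)$. I would note first that $(\bE_1,\bJ_1,\bR_1)$ is again a dissipative Hamiltonian \textsf{DAE} (congruence preserves symmetry, skew-symmetry and semidefiniteness), so by the cited characterization it suffices to show $\ker\bE_1\cap\ker\bJ_1\cap\ker\bR_1=\{0\}$. The point to get right is that $\bE$ and $\bR$ (being symmetric) and $\bJ$ (being skew) all leave $\mathcal{N}^\perp$ invariant, since $\mathcal{N}$ is contained in the kernel of each; granted this, if $v_1$ lies in the common kernel of $\bE_1,\bJ_1,\bR_1$ then $\bE\bV_1 v_1$, $\bJ\bV_1 v_1$ and $\bR\bV_1 v_1$ all lie in $\mathcal{N}^\perp$ and have vanishing $\bV_1$-coordinates, hence are zero, so $\bV_1 v_1\in\mathcal{N}\cap\mathcal{N}^\perp=\{0\}$ and $v_1=0$. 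Regularity then follows from \cite{MehMW19_ppt}.

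Finally, for the closing assertion I would observe that deleting the third block equation and the variable $\tbx_3$ turns \eqref{structstc}--\eqref{structout} into \eqref{subsystemstate}--\eqref{subsystemout}, which has precisely the form \eqref{pHDAE} with energy matrix $\mat{cc}\bE_1 & 0\\0 & 0\rix$, structure matrix $\mat{cc}\bJ_1 & 0\\0 & 0\rix$, dissipation matrix $\mat{cc}\bR_1 & 0\\0 & 0\rix$, port matrix $\mat{c}\bP_1\\0\rix$ and unchanged $\bS,\bN$. Symmetry and skew-symmetry of the first two, and $\mat{cc}\bE_1 & 0\\0 & 0\rix\ge 0$, are immediate, and the new passivity matrix is $\mat{cc}\bR_1 & \bP_1\\\bP_1^T & \bS\rix$ bordered by a zero row and column; since $\mat{cc}\bR_1 & \bP_1\\\bP_1^T & \bS\rix=\mat{cc}\bV_1^T & 0\\0 & \bI\rix\bW\mat{cc}\bV_1 & 0\\0 & \bI\rix\ge 0$ and padding with zeros preserves positive semidefiniteness, the subsystem \eqref{subsystemstate}--\eqref{subsystemout} is again a \textsf{pHDAE}.
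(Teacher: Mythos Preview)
Your proof is correct and follows essentially the same approach as the paper: isolate the common nullspace $\mathcal N=\ker\bE\cap\ker\bJ\cap\ker\bR$ by an orthogonal congruence (this is the paper's first step via SVD, citing \cite{MehMW19_ppt}), then split that nullspace by a row compression of the resulting input block (the paper compresses $\tilde\bB_2-\tilde\bP_2$, which coincides with your $\ker\bB^T$-splitting once you have shown $\bP^T|_{\mathcal N}=0$), and finally invoke semidefiniteness of the transformed passivity matrix to kill the $\bP_2,\bP_3$ blocks. Your treatment is in fact more detailed than the paper's sketch --- in particular your explicit verification that $\ker\bE_1\cap\ker\bJ_1\cap\ker\bR_1=\{0\}$ via the invariance of $\mathcal N^\perp$, and the check that the truncated passivity matrix remains positive semidefinite, fill in steps the paper leaves implicit.
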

\begin{proof}
First determine an orthogonal matrix $\bV_1$ (via SVD) such that
\[
\bV_1^T (\lambda \bE-(\bJ-\bR)) \bV_1=
 \lambda \mat{cc} \bE_1 & 0  \\
 0 & 0 \rix  -
 \mat{cc} \bJ_1-\bR_1 & 0 \\ 0 & 0 \rix,\ \bV_1^T(\bB-\bP)=  \mat{c} \bB_1-\bP_1 \\ \tilde \bB_2-\tilde \bP_2 \rix.
\]
Such $\bV_1$ exists, since  $\bE,\bJ,\bR$ have a common nullspace when the pencil $\lambda \bE-(\bJ-\bR)$ is singular \cite{MehMW19_ppt}.
Then a row compression of $\tilde \bB_2-\tilde \bP_2$ via an orthogonal  matrix $\tilde \bV_2$
and a congruence transformation with  $\bV_2=\textsf{diag}(\bI, \tilde \bV_2)$ is performed, so that with $\bV=
\textsf{diag}(\bV_1, \bV_2)$, we obtain the zero pattern  in \eqref{structstc}. Updating the output equation accordingly and using the fact that the transformed passivity matrix
\[
\tilde \bW=\begin{bmatrix} \bV^T \bR \bV & \bV^T\bP\\ \bP^T \bV^T & \bS\end{bmatrix}\in\mathbb{R}^{(n+m)\times(n+m)}
\]
is still semidefinite; it follows that $\bP_2=0$ and $\bP_3=0$, giving the desired form.
\end{proof}
The  next result presents a condensed form and shows that conditions ({C2}) and ({O2}) are equivalent and hold for a subsystem of (\ref{subsystemstate}). 
 \begin{lemma}\label{lem:c2o2} For the \textsf{pHDAE}  in
 (\ref{subsystemstate}) 
 there exists an orthogonal basis transformation $\hbV$ in the state space and $\bU$ in the control space
 such that in the new variables $\hbx= \mat{ccccc} \hbx_1^{\,T} & \hbx_2^{\,T} & \hbx_3^{\,T}
  \hbx_4^{\,T} & \hbx_5^{\,T} & \hbx_6^{\,T}
 \rix^T = \hbV^T \mat{cc} \tbx_1^{\,T} & \tbx_2^{\,T}
 \rix^T$, and $\mat{ccc
 }  \bu_1^T & \bu_2^T & \bu_3^T \rix^T=  \bU^T\bu$
 the system has the form
  \begin{eqnarray}
&& \mat{cccccc} \bE_{11} &\bE_{12} & 0 & 0 & 0 &0\\
 \bE_{21} & \bE_{22}  & 0 & 0&0& 0\\
 0 & 0 & 0 & 0&0& 0\\ 0 & 0 & 0 &0 &0 & 0\\  0 & 0 & 0 &0 &0 & 0\\  0 & 0 & 0 &0 &0& 0\rix  \mat{c} \dot \hbx_1 \\ \dot \hbx_2 \\ \dot \hbx_3 \\ \dot \hbx_4\\ \dot \hbx_5\rix =
 \mat{cccccc} \bJ_{11}-\bR_{11} &  \bJ_{12}-\bR_{12}& \bJ_{13}-\bR_{13}&\bJ_{14} & \bJ_{15}&0\\
 \bJ_{21}-\bR_{21} &  \bJ_{22}-\bR_{22}& \bJ_{23}-\bR_{23}&\bJ_{24} & 0&0\\
 \bJ_{31}-\bR_{21} & \bJ_{32}-\bR_{32} & \bJ_{33}-\bR_{33} & 0 & 0&0\\
 \bJ_{41} & \bJ_{42} & 0& 0 & 0 & 0\\ \bJ_{51} & 0&0  & 0  & 0 & 0\\ 0& 0 &  0 & 0& 0& 0\rix\mat{c} \hbx_1 \\ \hbx_2 \\ \hbx_3 \\\hbx_4\\ \hbx_5 \\ \hbx_6\rix\nonumber \\
 && \quad \qquad +  \mat{ccc} \bB_{11}-\bP_{11}  & \bB_{12}-\bP_{12} & \bB_{13}-\bP_{13}\\
\bB_{21}-\bP_{21}  & \bB_{22}-\bP_{22} & \bB_{23}-\bP_{23}\\
\bB_{31}-\bP_{31}  & \bB_{32}-\bP_{32} & \bB_{33}-\bP_{33}\\
  0 & \bB_{42} & \bB_{43}\\
   0 & 0 & \bB_{53}\\
 0 & 0 & \bB_{63} \rix \mat{c}  \bu_1 \\ \bu_2 \\\bu_3 \rix,~~~~~~ \label{fullstc}\\
&& \mat{c} \by_1 \\ \by_2 \\ \by_3\rix =
 \mat{cccccc} (\bB_{11}+\bP_{11})^T & (\bB_{21}+\bP_{21})^T & \bB_{31}+\bP_{31})^T  & 0 & 0 & 0 \\
 (\bB_{12}+\bP_{12})^T & (\bB_{22}+\bP_{22})^T & \bB_{32}+\bP_{32})^T  & \bB_{42}^T & 0 & 0 \\
  (\bB_{13}+\bP_{13})^T & (\bB_{23}+\bP_{23})^T & \bB_{33}+\bP_{33})^T  & \bB_{43}^T & \bB_{53}^T & \bB_{63}^T \rix 
  \mat{c} \hbx_1 \\ \hbx_2 \\ \hbx_3 \\ \hbx_4 \\ \hbx_5 \\ \hbx_6\rix
 +\mat{ccc} \bD_{11} & \bD_{12} & \bD_{13}\\
 \bD_{21} & \bD_{22} & \bD_{23}\\
 \bD_{31} & \bD_{32} & \bD_{33}\rix\mat{c}  \bu_1 \\ \bu_2 \\\bu_3 \rix
 ,~~~~
\nonumber \label{fullstcout}
  \end{eqnarray}
  where $\bE_{22}$, $\bJ_{33}-\bR_{33}$, $\bJ_{15}$, and $\bB_{42}$ and $\bB_{63}$  are invertible.
Furthermore, the subsystem obtained by deleting the first, fifth, and sixth block row and column satisfies {\rm (C2)} and equivalently {\rm (O2)}.
\end{lemma}
\begin{proof}
The proof follows again by a sequence of orthogonal transformations.  
Starting from (\ref{subsystemstate}) in the first step 
one determines an orthogonal matrix $\bV_1$ (via a spectral decomposition of $\bE_1$) such
%
$ \tbV_1^T \bE_1 \tbV_1=
\mat{cc} \bE_{11} & 0  \\
 0 & 0 \rix$
 %
with $\bE_{11}>0$, and then forms a congruence transformation with $\hbV_1=\textsf{diag}(\tbV_1,\bI)$ yielding
 \[
 \hbV_1^T(\bJ-\bR) \hbV_1=\mat{cc} \tilde \bJ_{11}-\tilde \bR_{11} & \tilde \bJ_{12}-\tilde \bR_{12} \\ \tilde \bJ_{21}-\tilde \bR_{21}& \tilde \bJ_{22}-\tilde \bR_{22}  \rix,\ \hbV_1^T(\bB-\bP)=  \mat{c} \tilde \bB_1-\tilde \bP_1 \\ \tilde \bB_2-\tilde \bP_2\rix.
\]
Next compute a full rank decomposition 
%
$ \tbV_2^T (\widetilde \bJ_{22}-\widetilde \bR_{22}) \tbV_2=  \mat{cc} \widehat \bJ_{22} -\widehat \bR_{22} & 0 \\ 0& 0 \rix$,
%
where $\widehat \bJ_{22} -\widehat \bR_{22}$ is invertible and  $\widehat \bR_{22}\geq 0$.
This exists, since $\widetilde \bJ_{22}-\widetilde \bR_{22}$ has a positive semidefinite symmetric part.
Then an appropriate  congruence transformation with $\hbV_2=\textsf{diag}(\bI,  \tbV_2, \bI)$
yields
\begin{eqnarray*}
\hbV^T_2\hbV_1^T \bE \hbV_1\hbV_2 &=&
\mat{cccc} \bE_{11} & 0 &0 &0 \\
 0 & 0 & 0 & 0\\ 0 & 0  &0 &0\\ 0 & 0  &0 &0
\rix ,\\
\hbV^T_2 \hbV_1^T(\bJ-\bR) \hbV_1\hbV_2&=&\mat{cccc} \widehat \bJ_{11}-\widehat \bR_{11} & \widehat \bJ_{12}-\widehat \bR_{12} & \widehat \bJ_{13} & 0\\ \widehat \bJ_{21}-\widehat \bR_{21} & \widehat  \bJ_{22}-\widehat \bR_{22} &0 &0 \\ \widehat \bJ_{31} & 0& 0 &0  \\ 0 & 0 & 0 & 0\rix,\
\hbV_2^T\hbV_1^T(\bB-\bP)=  \mat{c} \widehat \bB_1-\widehat \bP_1 \\ \widehat \bB_2-\widehat \bP_2 \\ \widehat \bB_3 \\ \widehat \bB_4\rix,
\end{eqnarray*}
where $\widehat  \bJ_{22}-\widehat \bR_{22}$ is invertible and $\widehat \bB_4$ has full row rank. 

Then one  performs  an orthogonal   decomposition
\[ \tbV_3^T \mat{c} \widehat \bB_3 \\ \widehat \bB_4 \rix \bU= \mat{ccc} 0 & \bB_{42} &\bB_{43}\\ 0 & 0 & \bB_{53}\\
0 & 0 & \bB_{63} \rix
\]
with $\bB_{42}$ and $\bB_{63}$ square  nonsingular, where the number of rows in $\bB_{63}$ is that of $\bB_4$ and applies an appropriate  congruence transformation with $\hbV_3=\textsf{diag}(\bI, \bI,  \hbV_3 , \bI)$  so that one obtains block matrices
  \[
\mat{ccccc} \bE_{1}  & 0 & 0 & 0 &0\\
0 & 0 & 0&0& 0\\
 0 & 0 & 0 & 0 &0 \\ 0 & 0 & 0 &0  & 0\\ 0 & 0 & 0 &0  & 0\rix,
 \mat{ccccc} \bar \bJ_{11}-\bar  \bR_{11} &  \bar \bJ_{12}-\bar \bR_{12}& \bar \bJ_{13}-\bar \bR_{13}&\bar \bJ_{14} &0\\
 \bar \bJ_{21}-\bar \bR_{21} &  \bar \bJ_{22}-\bar \bR_{22}& 0 & 0&0\\
 \bar \bJ_{31}  & 0& 0 & 0 & 0\\  \bar\bJ_{41}&0  & 0  & 0 &0\\ 0 & 0 & 0 & 0 & 0\rix,
\]
As final step one computes a column compression of the full row rank matrix  $\bar \bJ_{41}$ and applies a an appropriate congruence transformation. This yields the desired form.

The fact that the several $\bP$ blocks  do not occur follows again from the semidefiniteness of the  transformed passivity matrix.
Since $\bJ_{51}$, $\bB_{42}$ and $\bB_{63}$ are invertible, it follows immediately that $\bu_3=0$ and $\hat \bx_1=0$ and that $\hat \bx_5$  is uniquely determined by all the other variables. Considering the subsystem obtained by removing the first, fifth, and sixth block row and column, it follows by the  symmetry structure  that the condition ({C2}) holds  if and only  ({O2}) holds.
\end{proof}

The  procedure to compute the condensed form (\ref{structstc}) immediately separates the 
dynamical part (given by the second block row), the  algebraic index-$1$ condition (given by the  third block rows), and the index-$2$ conditions (given by the fifth and first block row).  
In the following, for ease of notation,  we 
denote the system that is obtained by removing the variables $\widehat \bx_1$, $\widehat \bx_4$
and the corresponding first and fifth equation by
\[
\begin{aligned}
\bE_r\dot{\bx}_r&=(\bJ_r-\bR_r)\, \bx_r+(\bB_r-\bP_r)\,\bu,\ \bx_r(t_0)=0,\\
\by_r&=(\bB_r+\bP_r)^T\,\bx+(\bS_r+\bN_r)\,\bu.
\end{aligned}
\]
If we apply an output feedback
$\bu=-\bK_r\by_r$ with $\bK_r=\bK_r^T> 0 $, that makes the (2,2) block in the closed loop system invertible, then
this corresponds to a feedback
for the state-to-output map in  (\ref{subsystemstate}) of the form
$\by_r=(\bI  +(\bS_r+\bN_r)\bK_r)^{-1}((\bB_r+\bP_r)^T\bx_r$, which we can insert into the
first equation to obtain
\begin{eqnarray*}
\bE_r\dot{\bx_r}&=&(\bJ_r-\bR_r) \bx_r+(\bB_r-\bP_r) \bK_r\by_r\\
&=&\left ((\bJ_r-\bR_r)-(\bB_r-\bP_r) (\bK_r^{-1} +(\bS_r+\bN_r))^{-1}((\bB_r+\bP)^T\right )\bx_r.
\end{eqnarray*}
The negative of the symmetric part of the system matrix is the Schur complement of 
\[
\tilde \bW_r=\mat{cc}  \bR_r &\bP_r \\ \bP_r^T  & \bK_r^{-1}+\bS_r+\bN_r\rix\geq 0.
\]
Hence the closed loop system is regular and the closed loop system is still \textsf{pH}.

The procedures described above are computationally demanding, since they typically require large-scale singular value decompositions or spectral decompositions. Fortunately, in many practical cases the condensed form is already available  directly from the modeling procedure, so that the transfer function can be formed and the model reduction method can be directly applied.

 \section{Interpolatory model reduction of \textsf{pHDAE}s}  \label{sec:intmodred}
Given an order-$n$  \textsf{pHDAE} as in \eqref{pHDAE}, we want to construct an order-$r$ reduced \textsf{pHDAE},
with $r \ll n$,  having the same structured form
\begin{equation}\label{pHDAEred}
\begin{aligned}
\Er\dot{\bx}_r&=(\Jr-\Rr)\, \bx_r+(\Br-\Pred)\,\bu,\ \bx_r(t_0)=0,\\
\by_r&=(\Br+\Pred)^T\,\bx_r+(\Sr+\Nr)\,\bu,
\end{aligned}
\end{equation}
such that $\Er$, $\Jr$, $\Rr\in\IR^{r \times r}$, $\Br, \Pred\in\IR^{r \times m}$, $\Sr=\Sr^T$, $\Nr=-\Nr^T\in\IR^{m \times m}$ satisfy the same requirements as in Definition \ref{pHDAEDEF} and that the output $\by_r(t)$ of \eqref{pHDAEred} is an accurate approximation to the original output $\by(t)$ over a wide range of admissible inputs $\bu(t)$. We will enforce accuracy by constructing  the reduced model \eqref{pHDAEred} via interpolation.

Let $\bH(s) = \cbC(s\bE - \bA)^{-1} \cbB+\bD$ and $\Hr(s) = \cbCr(s\Er - \Ar)^{-1} \cbBr+\Dr$ denote the transfer functions of \eqref{pHDAE} and \eqref{pHDAEred}, where $\cbC = (\bB+\bP)^T$,  $\bA = \bJ-\bR$, $\cbB = \bB - \bP$, $\bD = \bS+\bN$, and similarly for the reduced-order (``hat'') quantities. Given
the right-interpolation points
$\{\sigma_1,\sigma_2,\ldots,\sigma_r\} \in \IC$ together with the corresponding left-tangent directions
$\{\Rdir_1,\Rdir_2,\ldots,\Rdir_r\} \in \IC^m$ and
the left-interpolation points
$\{\mu_1,\mu_2,\ldots,\mu_r\} \in \IC$ together with the corresponding right-tangent directions
$\{\Ldir_1,\Ldir_2,\ldots,\Ldir_r\} \in \IC^m$,
 we would like to construct $\Hr(s)$ such that it tangentially interpolates  $\bH(s)$, i.e.,
 \begin{equation}
 \bH(\sigma_i) \Rdir_i  = \Hr(\sigma_i) \Rdir_i~~~\mbox{and}~~~
  \Ldir_i^T\bH(\mu_i)   = \Ldir_i^T\Hr(\mu_i),~~\mbox{for}~~i=1,2,\ldots,r.
\end{equation}
These tangential interpolation conditions can be easily enforced via a Petrov-Galerkin projection  \cite{gallivan2005model,BG2017siam,AntBG20}.
Construct $\bV\in \IC^{n \times r} $ and $\bZ \in \IC^{n \times r}$ using
\begin{align}  \label{eqV}
\bV & =  \left[(\sigma_1 \bE - \bA)^{-1}\cbB \Rdir_1,~~(\sigma_2 \bE - \bA)^{-1}\cbB \Rdir_2,~~\cdots~(\sigma_r \bE - \bA)^{-1}\cbB \Rdir_r\right] ~~\mbox{and}~~\\
\bZ  &=  \left[(\mu_1 \bE - \bA)^{-T}\cbC^T \Ldir_1,~~(\mu_2 \bE - \bA)^{-T}\cbC^T \Ldir_2,~~\cdots~(\mu_r \bE - \bA)^{-1}\cbC^T \Ldir_r\right].\label{eqW}
\end{align}
Then the interpolatory reduced  model can be obtained  via projection:
\begin{equation} \label{pgred}
\Er = \bZ^T \bE \bV,~~\Ar = \bZ^T \bA \bV,~~\cbBr = \bZ^T \cbB,~~\cbCr =\cbC \bV,~~\mbox{and}~~\Dr = \bD.
\end{equation}
In the setting of \textsf{pHDAE}s two fundamental issues arise: First, the reduced quantities in \eqref{pgred} are no longer guaranteed to have the \textsf{pH} structure. This is easiest to see in
the reduced quantity $\cbAr =  \bZ^T \cbA \bV =  \bZ^T \bJ \bV - \bZ^T \bR \bV$. If we decompose $\cbAr$ into its symmetric and skew-symmetric parts, we can no longer guarantee that the symmetric part is positive semi-definite. This could be resolved by using a Galerkin projection, i.e., with $\bZ = \bV$.
In this case one only satisfies  the interpolation conditions associated with right interpolation data. However, this does not resolve the second issue since in the generic case when $r < \rank(\bE)$, the reduced quantity
$\Er$ is expected to be a nonsingular matrix; thus the reduced system will be an \textsf{ODE}. This means that the polynomial parts of $\bH(s)$ and $\Hr(s)$ do not match, leading to unbounded errors.

Structure-preservation interpolatory reduction of   \textsf{pH}   systems in the most general setting of tangential interpolation has been studied in
\cite{GugPBS12,GugPBS09}. However, this work focused on the \textsf{ODE} case. On the other hand,
\cite{GugSW13} developed the tangential interpolation framework for reducing \emph{unstructured} \textsf{DAE}s with guaranteed polynomial matching. Only recently in \cite{EggKLMM18,HauMM19_ppt}, the combined problem has been investigated.
We now develop a treatment of structure-preserving interpolatory model reduction problem
for index-$1$ and index-$2$ \textsf{pHDAE}s in the  general setting of tangential interpolation.

\subsection{Semi-explicit index-$1$ \textsf{pHDAE} systems}\label{sec:indexone}
The simplest class of \textsf{pHDAE}s are \emph{semi-explicit index-$1$} \textsf{pHDAE}s
of the form
\begin{align}
\begin{array}{rcl} \left[
\begin{array} {cc}
\bE_{11} & 0 \\ 0 & 0
\end{array} \right] \dot{\bx}(t)& = &
\left[\begin{array} {rr}
\bJ_{11}- \bR_{11} & \bJ_{12}- \bR_{12} \\ -\bJ_{12}^T- \bR_{12}^T & \bJ_{22}- \bR_{22}
\end{array} \right]  \bx(t) + \left[ \begin{array} {c}
\bB_{1}-\bP_1  \\ \bB_2-\bP_2
\end{array} \right] u(t), \\[3ex] \by(t) &= & \left[ \begin{array} {cc} \bB_1^T+\bP_1^T& \bB_2^T+\bP_2^T \end{array} \right] \bx(t) + (\bS+\bN) \bu(t).
\end{array}
 \label{daeind1B2nonzero}
\end{align}
where $\bE_{11}$ and $\bJ_{22}-\bR_{22}$ are nonsingular.  We 
have the following interpolation result.
\begin{theorem}
    \label{ind1B2nonzerosol1}
Consider the \textsf{pHDAE}  system  in \eqref{daeind1B2nonzero}.
 Let the interpolation points
$\{\sigma_1,\sigma_2,\ldots,\sigma_r\} \in \IC$ and the corresponding tangent directions
$\{\Rdir_1,\Rdir_2,\ldots,\Rdir_r\} \in \IC^m$ be given.
Construct the interpolatory model reduction basis $\bV$ as
\begin{equation} \label{Vind1}
\bV = \begin{bmatrix} \bV_1 \\ \bV_2 \end{bmatrix} =  \left[(\sigma_1 \bE - \bA)^{-1}\cbB \Rdir_1,
~~\cdots,~~(\sigma_r \bE - \bA)^{-1}\cbB \Rdir_r\right] \in \IC^{n \times r},
\end{equation}
where $\bV$ is partitioned conformably with the system,
and define the matrices
\begin{equation*}
\bbB = \begin{bmatrix}\Rdir_1 & \Rdir_2 & \cdots & \Rdir_r \end{bmatrix}\in \IC^{m \times r} ~ \mbox{and} ~
\cbD = \bD - (\bB_2^T+\bP_2^T) (\bJ_{22}-\bR_{22})^{-1} (\bB_2-\bP_2) \in \IC^{m \times m}.
\end{equation*}
Let
$
\bA=\mat{cc} \bA_{11} & \bA_{12}\\ \bA_{21} & \bA_{22} \rix=\bJ-\bR,
$
partitioned accordingly to \eqref{daeind1B2nonzero}. Then the transfer function $\Hr(s)$ of the reduced model
\begin{align} \label{romdaeind1B2nonzero}
\begin{array}{rclcrcl}
\Er \dot{\bx}_r(t)& =& (\Jr-\Rr) \bx_r(t) + \cbBr \bu(t),  &~~~~&  \by_r(t) &= &\cbCr \bx_r(t) + \Dr \bu(t)
\end{array}
\end{align}
with
\begin{align} \label{eqn:case2}
\begin{array}{rclcrcl}
\Er &= & \bV_1^T\bE_{11}\bV_1
& ~~~ & \cbCr &=& \cbC \bV + (\bB_2^T+\bP_2^T) \bA_{22}^{-1} (\bB_2-\bP_2)\bbB , \\
\Ar &= & \bV^T \cbA \bV + \bbB^T (\cbD-\bD) \bbB, & ~~~ &  \Dr &=& \cbD = \bD - (\bB_2^T+\bP_2^T) \bA_{22}^{-1} (\bB_2-\bP_2)\nonumber \\
\cbBr&=& \bV^T \cbB + \bbB^T(\bB_2^T+\bP_2^T) \bA_{22}^{-1} (\bB_2-\bP_2),
\end{array}
\end{align}
matches the polynomial part of $\bH(s)$  and tangentially interpolates it, i.e.,
\[
\bH(\sigma_i) \Rdir_i = \Hr(\sigma_i) \Rdir_i,~~\mbox{for}~~i=1,2,\ldots,r.
\]
Define $ \Pred=\frac 12 (-\cbGr+ \cbCr)$, and decompose
$\Dr= \Sr + \Nr$, $\Ar=  \Jr -\Rr$ into their symmetric and skew-symmtric part.
%
%
Then, the reduced model \eqref{romdaeind1B2nonzero} is a  \textsf{pHDAE}  system if
the reduced passivity matrix
$
\widehat \bW =\mat{cc} \Rr &  \Pred \\ \Pred^T &  \Sr\rix
$
is positive semidefinite.
\end{theorem}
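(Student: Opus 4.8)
The plan is to reduce the statement to the classical rational--Krylov interpolation theory for \textsf{ODE}s by first eliminating the algebraic variable, and then to check that the correction terms built into \eqref{eqn:case2} are exactly what is needed so that the projected data reproduce that elimination; the port-Hamiltonian claim then follows essentially by inspection. To begin, partition $\cbB$, $\cbC$ and $\bA=\bJ-\bR$ conformably with \eqref{daeind1B2nonzero} into blocks $\cbB_1,\cbB_2$, $\cbC_1,\cbC_2$, $\bA_{11},\bA_{12},\bA_{21},\bA_{22}$, so that $\cbB_2=\bB_2-\bP_2$, $\cbC_2=(\bB_2+\bP_2)^T$ and $\bA_{22}=\bJ_{22}-\bR_{22}$. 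Since $\bA_{22}$ is nonsingular, the algebraic block of \eqref{daeind1B2nonzero} gives $\bx_2=-\bA_{22}^{-1}(\bA_{21}\bx_1+\cbB_2\bu)$, and substitution produces an equivalent \textsf{ODE} realization with transfer function $\bH(s)=\widetilde\cbC(s\bE_{11}-\widetilde\bA)^{-1}\widetilde\cbB+\cbD$, where $\widetilde\bA=\bA_{11}-\bA_{12}\bA_{22}^{-1}\bA_{21}$, $\widetilde\cbB=\cbB_1-\bA_{12}\bA_{22}^{-1}\cbB_2$, $\widetilde\cbC=\cbC_1-\cbC_2\bA_{22}^{-1}\bA_{21}$, and $\cbD=\bD-\cbC_2\bA_{22}^{-1}\cbB_2$ is precisely the matrix $\cbD$ of the statement. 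In particular $\bH(s)$ is proper and its polynomial part is the constant matrix $\cbD$.

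Next I would apply the same block elimination to the resolvent $(\sigma_i\bE-\bA)^{-1}$ in \eqref{Vind1}: this shows that the $i$-th column $v_i$ of $\bV_1$ satisfies $(\sigma_i\bE_{11}-\widetilde\bA)v_i=\widetilde\cbB\Rdir_i$ while the $i$-th column of $\bV_2$ equals $-\bA_{22}^{-1}(\bA_{21}v_i+\cbB_2\Rdir_i)$; equivalently $\bV_2=-\bA_{22}^{-1}(\bA_{21}\bV_1+\cbB_2\bbB)$, and $\bV_1$ is exactly a tangential rational--Krylov basis for the eliminated \textsf{ODE} at the points $\sigma_i$ along $\Rdir_i$. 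Substituting this expression for $\bV_2$ into the formula for $\cbCr$ in \eqref{eqn:case2}, the correction term $\cbC_2\bA_{22}^{-1}\cbB_2\bbB$ cancels the cross contributions coming from $\cbC_2\bV_2$ and leaves the clean identity $\cbCr=\widetilde\cbC\,\bV_1$.

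The crux is the input/dynamics side, where some care is needed: because neither $\bA$ nor $\bA_{22}$ is symmetric, the reduced matrices $\Ar$ and $\cbBr$ of \eqref{eqn:case2} are \emph{not} the plain one-sided projections $\bV_1^T\widetilde\bA\bV_1$ and $\bV_1^T\widetilde\cbB$, so one cannot simply invoke a projection--interpolation theorem. Instead I would verify directly, for $e_i$ the $i$-th standard unit vector, the identity
\[
(\sigma_i\Er-\Ar)\,e_i=\cbBr\,\Rdir_i,\qquad i=1,\dots,r.
\]
Using $\bV_2=-\bA_{22}^{-1}(\bA_{21}\bV_1+\cbB_2\bbB)$ one finds $\bV^T\bA\bV\,e_i=\bV_1^T(\bA_{11}v_i+\bA_{12}w_i)-\bV_2^T\cbB_2\Rdir_i$ with $w_i=-\bA_{22}^{-1}(\bA_{21}v_i+\cbB_2\Rdir_i)$; the relation $(\sigma_i\bE_{11}-\widetilde\bA)v_i=\widetilde\cbB\Rdir_i$ then collapses $\sigma_i\Er e_i-\bV^T\bA\bV e_i$ to $\bV^T\cbB\Rdir_i$, and finally the correction $\bbB^T(\cbD-\bD)\bbB$ in $\Ar$, together with $\bD-\cbD=\cbC_2\bA_{22}^{-1}\cbB_2$ (the definition of $\cbD$), turns the remaining terms into exactly $\cbBr\Rdir_i$. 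Combined with $\cbCr=\widetilde\cbC\bV_1$ and $\Dr=\cbD$, this yields, for every $\sigma_i$ that is not a pole of $\Hr$,
\[
\Hr(\sigma_i)\Rdir_i=\cbCr(\sigma_i\Er-\Ar)^{-1}\cbBr\Rdir_i+\Dr\Rdir_i=\cbCr e_i+\cbD\Rdir_i=\widetilde\cbC v_i+\cbD\Rdir_i=\bH(\sigma_i)\Rdir_i,
\]
which is the tangential interpolation condition. Since $\bE_{11}=\bE_{11}^T$ is nonsingular, hence positive definite, $\Er=\bV_1^T\bE_{11}\bV_1$ is nonsingular in the generic case that $\bV_1$ has full column rank, so $\Hr(s)$ is proper with polynomial part $\Dr=\cbD$, matching that of $\bH(s)$. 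I expect this step to be the main obstacle: not conceptually deep, but the block bookkeeping has to be organized around the displayed identity rather than around any projection formula.

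Finally I would recognize the reduced data as a \textsf{pHDAE}. Setting $\Jr=\tfrac12(\Ar-\Ar^T)$, $\Rr=-\tfrac12(\Ar+\Ar^T)$, $\Sr=\tfrac12(\Dr+\Dr^T)$, $\Nr=\tfrac12(\Dr-\Dr^T)$, $\Br=\tfrac12(\cbCr^T+\cbBr)$ and $\Pred=\tfrac12(\cbCr^T-\cbBr)$, one has $\Ar=\Jr-\Rr$, $\Dr=\Sr+\Nr$, $\cbBr=\Br-\Pred$ and $\cbCr=(\Br+\Pred)^T$, so the reduced system \eqref{romdaeind1B2nonzero} is literally of the form \eqref{pHDAEred}. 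By construction $\Jr^T=-\Jr$, $\Er^T=\Er$ and $\Nr^T=-\Nr$, and $\Er=\bV_1^T\bE_{11}\bV_1\ge 0$ because $\bE=\mathrm{diag}(\bE_{11},0)\ge 0$ forces $\bE_{11}\ge 0$; hence properties (1) and (2) of Definition~\ref{pHDAEDEF} hold automatically, leaving only $\widehat\bW\ge 0$, which is the hypothesis. Therefore the reduced model \eqref{romdaeind1B2nonzero} is a \textsf{pHDAE}.
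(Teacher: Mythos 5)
Your proof is correct, and the computations check out: the identity $(\sigma_i\Er-\Ar)e_i=\cbBr\Rdir_i$ does follow from the two block equations defining $\bv_i$ and $\bw_i$ together with the correction term $\bbB^T(\cbD-\bD)\bbB$, and the cancellation $\cbCr=\widetilde{\cbC}\,\bV_1$ is right. Your route differs from the paper's in a worthwhile way. The paper first forms the plain Galerkin-projected model $(\tEr,\tAr,\tcbB,\tcbC,\bD)$, observes that it interpolates but has the wrong value at $s=\infty$, and then invokes the $\bD$-shift device of Mayo--Antoulas and Beattie--Gugercin (as used in the \textsf{DAE} setting by Gugercin--Stykel--Wyatt) to assert, by citation, that replacing $\bD$ by $\cbD$ and adding the compensating terms to $\Ar,\cbBr,\cbCr$ preserves the interpolation conditions while matching the polynomial part. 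You instead eliminate $\bx_2$ to obtain the explicit proper \textsf{ODE} realization $(\bE_{11},\widetilde{\bA},\widetilde{\cbB},\widetilde{\cbC},\cbD)$, identify $\bV_1$ as a tangential rational Krylov basis for it, and verify the resolvent identity directly; in effect you reprove the shift lemma in this instance rather than citing it. What your approach buys is a self-contained argument that also makes transparent \emph{why} the specific correction terms in \eqref{eqn:case2} are the right ones (they are exactly what is needed to turn the one-sided projection of the \textsf{DAE} data into the interpolation identity for the eliminated \textsf{ODE}); what the paper's approach buys is brevity and a clean separation between the structure-preserving Galerkin step and the polynomial-matching step. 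You are also right, and more careful than the paper, to flag that properness of $\Hr$ (hence polynomial-part matching) requires $\bV_1$ to have full column rank so that $\Er=\bV_1^T\bE_{11}\bV_1$ is invertible; the paper leaves this implicit. The final port-Hamiltonian verification is the same in both arguments and reduces to the hypothesis $\widehat{\bW}\geq 0$.
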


\begin{proof}
We employ a Galerkin projection using the interpolatory model reduction basis $\bV$ to obtain the \emph{intermediate} reduced model
\begin{align*}
\tEr = \bV_1^T\bE_{11}\bV_1,~~
\tAr = \tJr - \tRr  = \bV^T \bJ \bV - \bV^T \bR \bV,~~
\tcbB= \bV^T \cbB, ~~\tcbC= \cbC \bV~~\mbox{and}~~\tDr = \bD.
\end{align*}
Even though this reduced model is a \textsf{pHDAE}  system due to the one-sided Galerkin model reduction and
it satisfies the tangential interpolation conditions,
it  will \emph{not} match the transfer $\bH(s)$ function at $s=\infty$, i..e, its polynomial part, given by
\[
\lim_{s\to \infty} \bH(s) =
\cbD = \bD - (\bB_2^T +\bP_2^T)\bA_{22}^{-1} (\bB_2-\bP_2).
\]
A remedy to this problem,  proposed in \cite{MayA07,beattie2008ipm} and employed in the general \textsf{DAE} setting in \cite{GugSW13}, is to modify  the $\bD$-term in the reduced model  to match the polynomial part and at the same time  \emph{to shift} the other reduced  quantities appropriately so as to keep the tangential interpolation property.  Using this, we obtain a modified  reduced model
\begin{align}
\Er &= \tEr  = \bV_1^T\bE_{11}\bV_1,~~ \nonumber \\
\Ar &= \tAr + \bbB^T (\cbD-\bD) \bbB   =  \bV^T \bJ \bV - \bV^T \bR \bV - \bbB^T(\bB_2^T +\bP_2^T)A_{22}^{-1} (\bB_2-\bP_2)\bbB ,\nonumber \\
\cbBr&= \tcbB + \bbB^T(\bB_2^T +\bP_2^T)A_{22}^{-1} (\bB_2-\bP_2) \nonumber\\
&= \bV_1^T (\bB_1 -\bP_1)+ \bV_2^T
(\bB_2 -\bP_2)+  \bbB^T(\bB_2^T +\bP_2^T)A_{22}^{-1} (\bB_2-\bP_2),\\
\cbCr &= \tcbC + (\bB_2^T +\bP_2^T)A_{22}^{-1} (\bB_2-\bP_2) \bbB\nonumber            \\
&= (\bB_1^T+\bP_1^T) \bV_1^T + (\bB_2^T +\bP_2^T)\bV_2^T +(\bB_2^T +\bP_2^T)A_{22}^{-1} (\bB_2-\bP_2) \bbB ,~~~~\mbox{and}\nonumber\\
\Dr &= \cbD = \bD - (\bB_2^T +\bP_2^T)A_{22}^{-1} (\bB_2-\bP_2),\nonumber
\end{align}
which  satisfies the original tangential interpolation conditions, and  matches the polynomial part due to the modified $\Dr$-term. However, for this  system to be \textsf{pH}
we need to check that the associated passivity matrix is still positive semidefinite, which after rewriting the input and output matrix in the usual way is exactly  the condition on $\widehat \bW$ in the assertion.
We then have that the reduced model not only  satisfies the interpolation conditions and matches the polynomial part at $s=\infty$, but also is \textsf{pH}.
\end{proof}

\begin{remark}\label{rem:no input}{\rm
Note that if the input does not influence the algebraic equations, i.e., if $\bB_2 = \bP_2=0$, then the shift of the constant term is not necessary, and the formulas simplify significantly, i.e., $\Ar =  \bV^T \cbA \bV$, $\cbBr= \tcbB = \bV^T \cbB$, $\cbCr = \cbC \bV$, and $\Dr = \cbD = \bD$.}
 \end{remark}

Another solution to preserving  \textsf{pH}  structure via interpolation can be obtained  through the following theorem.
 \begin{theorem}  \label{ind1B2nonzerosol2}
Consider a full-order  \textsf{pHDAE}  system of the form \eqref{daeind1B2nonzero}.
 Let interpolation points
$\{\sigma_1,\sigma_2,\ldots,\sigma_r\} \in \IC$ and the corresponding tangent directions
$\{\Rdir_1,\Rdir_2,\ldots,\Rdir_r\} \in \IC^m$ be given.
Construct the interpolatory model reduction basis $\bV$ as in \eqref{Vind1}.  Then the reduced model

\vspace{-2ex}
{\small
\begin{align}
\begin{array}{rcl}
 \left[
\begin{array} {cc}
\widehat{\bV}_1^T\bE_{11} \widehat{\bV}_1 & 0 \\ 0 & 0
\end{array} \right] \dot{\bx}(t)& = &
\left[\begin{array} {rr}
{\bV}_1^T(\bJ_{11}- \bR_{11}){\bV}_1 & {\bV}_1^T(\bJ_{12}- \bR_{12}) \\ (-\bJ_{12}^T- \bR_{12}^T){\bV}_1 & \bJ_{22}- \bR_{22}
\end{array} \right]  \bx(t) + \left[ \begin{array} {c}
{\bV}_1^T(\bB_{1}-\bP_1)  \\ \bB_2-\bP_2
\end{array} \right] \bu(t) \\[3ex] \by_r(t) &= &  \left[ \begin{array} {cc} (\bB_1^T+\bP_1)^T{\bV}_1& \bB_2+\bP_2^T \end{array} \right] \bx(t) + \bD \bu(t)
\end{array}
~ \label{romdaeind1B2nonzerosol2}
\end{align}
}

\vspace{-2ex}
\noindent
retains the  \textsf{pH}  structure, tangentially interpolates the original model, and matches the polynomial part.
\end{theorem}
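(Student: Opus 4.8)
The plan is to recognize that the reduced system in \eqref{romdaeind1B2nonzerosol2} is nothing other than the one-sided Galerkin (congruence) projection of \eqref{daeind1B2nonzero} induced by the block-diagonal matrix $\bV_0 := \textsf{diag}(\widehat{\bV}_1,\bI)$, where $\widehat{\bV}_1$ is a matrix whose columns form an orthonormal basis of $\operatorname{range}(\bV_1)$ -- one may simply take $\widehat{\bV}_1=\bV_1$ whenever $\bV_1$ already has full column rank; otherwise replacing $\bV_1$ by such a $\widehat{\bV}_1$ affects none of the conclusions below -- and $\bI$ is the identity on the algebraic coordinates. A direct block computation then shows $\Er=\bV_0^T\bE\bV_0$, $\Jr=\bV_0^T\bJ\bV_0$, $\Rr=\bV_0^T\bR\bV_0$, $\cbBr=\bV_0^T\cbB$, $\cbCr=\cbC\bV_0$, $\Dr=\bD$, i.e., $\Br=\bV_0^T\bB$, $\Pred=\bV_0^T\bP$, $\Sr=\bS$, $\Nr=\bN$. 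Once this identification is in place all three claims follow with little further work, and this is precisely the variant of the Galerkin idea mentioned just before Theorem~\ref{ind1B2nonzerosol1}, but with the projector chosen so as not to touch the algebraic block.

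\emph{Port-Hamiltonian structure.} Because $\bV_0$ is real, the congruence preserves every defining property in Definition~\ref{pHDAEDEF}: $\Jr^T=\bV_0^T\bJ^T\bV_0=-\Jr$ and $\Er=\Er^T$, so the reduced differential-algebraic operator $\Er\frac{d}{dt}-\Jr$ is skew-adjoint; $\Er=\bV_0^T\bE\bV_0\geq 0$ since $\bE\geq 0$; and the reduced passivity matrix is a congruence of the original,
\[
\widehat\bW=\begin{bmatrix}\Rr & \Pred\\ \Pred^T & \Sr\end{bmatrix}=\begin{bmatrix}\bV_0^T & 0\\ 0 & \bI\end{bmatrix}\begin{bmatrix}\bR & \bP\\ \bP^T & \bS\end{bmatrix}\begin{bmatrix}\bV_0 & 0\\ 0 & \bI\end{bmatrix}\geq 0 ,
\]
so that, in contrast to Theorem~\ref{ind1B2nonzerosol1}, positive semidefiniteness of $\widehat\bW$ is now automatic and no side condition is needed. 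Moreover the reduced pencil is again semi-explicit of index one: its $(2,2)$-block $\bJ_{22}-\bR_{22}$ is left untouched (hence nonsingular), and the $(1,1)$-block of $\Er$ equals $\widehat{\bV}_1^T\bE_{11}\widehat{\bV}_1$, which is nonsingular because $\bE_{11}>0$ (a nonsingular principal submatrix of $\bE\geq 0$) and $\widehat{\bV}_1$ has full column rank; in particular $s\Er-\Ar$ is a regular pencil.

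\emph{Interpolation and polynomial part.} For each $i$, the $i$-th column of $\bV$ in \eqref{Vind1}, namely $(\sigma_i\bE-\bA)^{-1}\cbB\Rdir_i$, has its leading block in $\operatorname{range}(\bV_1)=\operatorname{range}(\widehat{\bV}_1)$ and an unconstrained trailing block in the algebraic coordinate space, hence lies in $\operatorname{range}(\bV_0)$; therefore $\operatorname{range}(\bV)\subseteq\operatorname{range}(\bV_0)$. The standard interpolation lemma for Petrov--Galerkin projections (see \cite{gallivan2005model}, and as used for \textsf{DAE}s in \cite{GugSW13}) then gives $\cbC(\sigma_i\bE-\bA)^{-1}\cbB\Rdir_i=\cbCr(\sigma_i\Er-\Ar)^{-1}\cbBr\Rdir_i$ for every $\sigma_i$ at which $s\Er-\Ar$ is invertible (which excludes only finitely many values, the reduced pencil being regular), and since $\Dr=\bD$ this is exactly $\bH(\sigma_i)\Rdir_i=\Hr(\sigma_i)\Rdir_i$. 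For the polynomial part, recall that for a semi-explicit index-$1$ \textsf{DAE} the polynomial part of the transfer function is the constant $\lim_{s\to\infty}\bH(s)$; eliminating the algebraic variable exactly as in the proof of Theorem~\ref{ind1B2nonzerosol1} gives $\lim_{s\to\infty}\bH(s)=\bD-(\bB_2+\bP_2)^T(\bJ_{22}-\bR_{22})^{-1}(\bB_2-\bP_2)$, a quantity determined solely by $\bD$ and the algebraic-block data $\bB_2,\bP_2,\bJ_{22},\bR_{22}$. Since the reduced model \eqref{romdaeind1B2nonzerosol2} carries all of these over verbatim, $\lim_{s\to\infty}\Hr(s)$ equals the same constant and the polynomial parts coincide.

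I do not anticipate a serious obstacle: the only delicate point is the rank caveat on $\bV_1$ -- replacing it if necessary by an orthonormal basis of its range so that $\Er$ has a nonsingular leading block and $s\Er-\Ar$ stays a regular index-$1$ pencil -- after which everything is bookkeeping once the block-diagonal congruence structure is recognized. This is exactly what makes the scheme attractive: by never disturbing the algebraic equations it secures matching of the polynomial part automatically, dispensing with the explicit $\bD$-term shift that was required in Theorem~\ref{ind1B2nonzerosol1}.
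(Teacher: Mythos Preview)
Your proposal is correct and follows essentially the same route as the paper: identify the reduced model as the one-sided (congruence) Galerkin projection by the block-diagonal matrix $\textsf{diag}(\bV_1,\bI)$, invoke $\mathsf{span}(\bV)\subseteq\mathsf{span}(\widehat{\bV})$ for tangential interpolation, observe that the untouched $(2,2)$-block $\bJ_{22}-\bR_{22}$ and $\bB_2,\bP_2$ force the polynomial part to match, and note that congruence preserves the \textsf{pH} structure. Your treatment is in fact a bit more careful than the paper's---you make explicit the passivity-matrix congruence and flag the full-column-rank caveat on $\bV_1$ (replacing it by an orthonormal basis $\widehat{\bV}_1$ of its range) so that the reduced leading block $\widehat{\bV}_1^T\bE_{11}\widehat{\bV}_1$ is nonsingular and the reduced pencil is regular of index one; the paper simply takes $\widehat{\bV}=\textsf{diag}(\bV_1,\bI)$ without comment.
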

 \begin{proof}
We first note that the subspace spanned by the columns of $\bV = \begin{bmatrix} \bV_1^T & \bV_2^T \end{bmatrix}^T $ is contained in the subspace spanned by the columns of
$
 \widehat{\bV} := \textsf{diag}(\bV_1,\bI). 
$
Then the system in \eqref{romdaeind1B2nonzerosol2} results from reducing the original system in \eqref{daeind1B2nonzero} via  $\widehat{\bV}$.
Since $\mathsf{span}(\bV) \subseteq \mathsf{span}(\widehat{\bV})$, this reduced \textsf{DAE}  automatically satisfies the interpolation conditions and since $ \widehat{\bV}$ does not alter the matrix $\bJ_{22} -\bR_{22}$ and the matrices $\bB_2,\bP_2$, the  polynomial part of the transfer function  is
$ \bD - (\bB_2^T+\bP_2^T) (\bJ_{22} -\bR_{22})^{-1} (\bB_2-\bP_2)$, matching that of the original model.
The reduced  system in \eqref{romdaeind1B2nonzerosol2} is  \textsf{pH}  as this one-sided projection retains the original  \textsf{pH}  structure.   \end{proof}

\begin{remark}\label{rem:order}{\rm
 Theorem \ref{ind1B2nonzerosol2} presents a seemingly easy solution to structure-preserving interpolatory model reduction of index-1 \textsf{pHDAE}s compared to Theorem \ref{ind1B2nonzerosol1}.
However,  this 
may not be the maximal reduction that is possible because redundant algebraic conditions cannot be removed; see \cite{MehS05}.
 }
\end{remark}
\subsection{Semi-explicit  \textsf{pHDAE}  systems with index-$2$ constraints}
\label{sec:index2cases}
As next class we study semi-explicit index-$2$
systems. We first consider the case that the input does not affect the algebraic equations.
\begin{theorem}\label{thm:indtwo1}
Consider an index-$2$  \textsf{pHDAE}  system of the form
\begin{align}
\begin{array}{rcl}
\left[
\begin{array} {cc}
\bE_{11} & 0 \\ 0 & 0
\end{array} \right]
\left[ \begin{array}{c}
\dot{\bx}_1(t) \\
\dot{\bx}_2(t)
\end{array}
\right]
& = &
\left[\begin{array} {cc}
\bJ_{11}- \bR_{11} & \bJ_{12} \\ -\bJ_{12}^T & \mathbf{0}
\end{array} \right] \left[ \begin{array}{c}
{\bx}_1(t) \\
{\bx}_2(t)
\end{array}
\right] + \left[ \begin{array} {c}
\bB_{1} -\bP_1 \\ \mathbf{0}
\end{array} \right] \bu(t)   \\[3ex]  \by(t) &= & \left[ \begin{array} {cc} \bB_1^T + \bP_1^T& \mathbf{0}  \end{array} \right] \left[ \begin{array}{c}
{\bx}_1(t) \\
{\bx}_2(t)
\end{array}
\right]  + \bD \bu(t),
\end{array}
~\label{eqn:ind2case1}
\end{align}
with $\bE_{11}>0$ and set $\bA_{11}=\bJ_{11}- \bR_{11}$.  Given interpolation points $\{\sigma_1,\sigma_2,\ldots,\sigma_r\}$ and associated tangent directions $\{ \Rdir_1, \Rdir_2,\dots, \Rdir_r\}$, let the vectors $\bv_i$, for $i=1,2,\ldots,r$, be the first block of the solution of
\begin{equation} \label{eqn:Vind2case}
\left[ \begin{array}{cc} \bA_{11}-\sigma_i\bE_{11}  & \bJ_{12}\\  -\bJ_{12}^T & \mathbf{0} \end{array} \right]
\left[ \begin{array}{cc} \bv_i \\  \bz \end{array} \right]=
\left[ \begin{array}{cc} (\bB_1-\bP_1)\Rdir_i \\  \mathbf{0} \end{array} \right].
\end{equation}
Define $\bV = \left[\bv_1, \bv_2, \dots, \bv_r\right]$.
Then the reduced model
\begin{equation}  \label{eqn:romind2case}
\Er \dot\bx_r = (\Jr - \Rr) \bx_r + \cbBr \bu(t),~~\by_r = \cbCr \bx_r + \Dr,
\end{equation}
\begin{align}
\mbox{with}\hspace{9ex}\Er &=  \bV^T\bE_{11}\bV,~~ \nonumber
\Jr  =  \bV^T \bJ_{11} \bV,~~ \Rr =  \bV^T \bR_{11} \bV, \\
\cbBr&=  \bV^T \bB_1 - \bV^T \bP_1,~~\cbCr= \bB_1^T \bV^T + \bP^T \bV_1^T,~~~~\mbox{and}~~~\Dr = \bD,
\hspace{4ex}
\label{redssind2case1}
\end{align}
is still \textsf{pH},  matches the polynomial part of the original transfer function, and satisfies the tangential interpolation conditions, i.e.,
\[
\bH(\sigma_i) \Rdir_i = \Hr(\sigma_i) \Rdir_i,~~\mbox{for}~~i=1,2,\ldots,r.
\]
\end{theorem}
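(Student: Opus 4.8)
The plan is to establish three facts about the reduced model \eqref{eqn:romind2case}--\eqref{redssind2case1} in turn: that it inherits the \textsf{pHDAE} structure (here even as a \textsf{pH} \textsf{ODE}), that its polynomial part equals that of $\bH(s)$, and that the stated tangential interpolation conditions hold; I expect only the last to carry real difficulty. For structure preservation, observe that in \eqref{eqn:ind2case1} we have $\bJ_{11}=-\bJ_{11}^T$ from skew-adjointness, while the passivity matrix $\bW$ of \eqref{eqn:ind2case1}, which has $\bR=\textsf{diag}(\bR_{11},0)$ and $\bP=\mat{c}\bP_1 \\ 0 \rix$, being positive semidefinite forces both $\bR_{11}=\bR_{11}^T\geq 0$ and $\mat{cc}\bR_{11} & \bP_1 \\ \bP_1^T & \bS\rix\geq 0$. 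Assuming $\bV$ has full column rank (orthonormalizing its columns if necessary), every reduced block in \eqref{redssind2case1} is a one-sided compression of a differential-part block: $\Er=\bV^T\bE_{11}\bV>0$ is symmetric, $\Jr=\bV^T\bJ_{11}\bV$ is skew-symmetric, $\Rr=\bV^T\bR_{11}\bV\geq 0$, $\Dr=\bD=\bS+\bN$ splits into symmetric and skew parts as before, and with $\Pred:=\bV^T\bP_1$ the reduced passivity matrix equals the congruence $\textsf{diag}(\bV^T,\bI)\mat{cc}\bR_{11} & \bP_1 \\ \bP_1^T & \bS\rix\textsf{diag}(\bV,\bI)\geq 0$; hence \eqref{eqn:romind2case} satisfies Definition~\ref{pHDAEDEF}.

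For the polynomial part, write $\bH(s)=(\bB_1+\bP_1)^T\mathbf{X}_1(s)+\bD$ where $\mat{c}\mathbf{X}_1(s) \\ \mathbf{X}_2(s)\rix=(s\bE-\bA)^{-1}\bB$. Eliminating $\mathbf{X}_2$ via the Schur complement of the zero $(2,2)$-block and using $\bE_{11}>0$ with $\bJ_{12}$ of full column rank (so that $\bJ_{12}^T(s\bE_{11}-\bA_{11})^{-1}\bJ_{12}=\frac1s\bJ_{12}^T\bE_{11}^{-1}\bJ_{12}+O(s^{-2})$ is invertible for $|s|$ large), one finds $\mathbf{X}_2(s)$ remains bounded while $\mathbf{X}_1(s)=(s\bE_{11}-\bA_{11})^{-1}\big((\bB_1-\bP_1)+\bJ_{12}\mathbf{X}_2(s)\big)=O(s^{-1})\to 0$ as $|s|\to\infty$. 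Thus $\bH(s)-\bD$ is strictly proper, so the polynomial part of $\bH$ is the constant $\bD$, matching the polynomial part $\Dr=\bD$ of the reduced \textsf{ODE}. (Intuitively: input and output couple only to the differential variable and there is no feedthrough through the algebraic one, so no $s$-term appears.)

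For interpolation, the crux is that the second block row of \eqref{eqn:Vind2case} forces $\bJ_{12}^T\bv_i=0$ for every $i$, hence $\bV^T\bJ_{12}=0$: encoding the algebraic constraint in the basis makes the coupling term $\bJ_{12}\bz$ invisible to the Galerkin projection. Reading the first block row of \eqref{eqn:Vind2case} as $(\sigma_i\bE_{11}-\bA_{11})\bv_i=\bJ_{12}\bz_i-(\bB_1-\bP_1)\Rdir_i$, left-multiplying by $\bV^T$, and writing $\bv_i=\bV\mathbf{e}_i$, one gets $(\sigma_i\Er-\Ar)\mathbf{e}_i=-\cbBr\Rdir_i$ (using $\Ar=\bV^T\bA_{11}\bV$ and $\bV^T\bJ_{12}=0$), so $(\sigma_i\Er-\Ar)^{-1}\cbBr\Rdir_i=-\mathbf{e}_i$ --- legitimate whenever $\sigma_i$ is not an eigenvalue of the reduced pencil, which is automatic when $\mathrm{Re}(\sigma_i)>0$ since the reduced system is \textsf{pH} and hence has spectrum in the closed left half-plane. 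Consequently $\Hr(\sigma_i)\Rdir_i=-\cbCr\mathbf{e}_i+\bD\Rdir_i=-(\bB_1+\bP_1)^T\bv_i+\bD\Rdir_i$. On the other hand, the coefficient matrix in \eqref{eqn:Vind2case} equals $-(\sigma_i\bE-\bA)$, so $\bv_i$ is the negative of the first block of $(\sigma_i\bE-\bA)^{-1}\bB\Rdir_i$, i.e.\ $\bv_i=-\mathbf{X}_1(\sigma_i)\Rdir_i$; hence $\bH(\sigma_i)\Rdir_i=(\bB_1+\bP_1)^T\mathbf{X}_1(\sigma_i)\Rdir_i+\bD\Rdir_i=-(\bB_1+\bP_1)^T\bv_i+\bD\Rdir_i$, which matches.

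The main obstacle is exactly the role of the constraint block in \eqref{eqn:Vind2case}: had $\bV$ been built from the differential block alone, the eliminated variable would re-enter through $\bJ_{12}\bz$ and destroy interpolation, whereas enforcing $\bJ_{12}^T\bv_i=0$ simultaneously makes the Galerkin-reduced \textsf{ODE} interpolate \emph{and} --- unlike Theorem~\ref{ind1B2nonzerosol1} --- removes any need to modify the $\bD$-term, since the polynomial part is already the constant $\bD$. The remaining items (full column rank of $\bV$, invertibility of $\sigma_i\bE-\bA$ and $\sigma_i\Er-\Ar$, the index-$2$ rank hypothesis on $\bJ_{12}$) are routine and I would only flag them in passing.
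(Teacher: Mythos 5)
Your proof is correct, but it reaches the interpolation property by a genuinely different and more self-contained route than the paper. The paper follows \cite{GugSW13}: it rewrites \eqref{eqn:ind2case1} as an equivalent implicit \textsf{ODE} on the constraint manifold using the spectral projectors $\bfpi_l,\bfpi_r$, and then invokes the result of that reference to conclude that the saddle-point construction \eqref{eqn:Vind2case} of $\bV$ yields interpolation and the constant polynomial part $\bD$ without ever forming the projectors. You instead verify everything by direct computation: the second block row of \eqref{eqn:Vind2case} gives $\bJ_{12}^T\bV=0$, so projecting the first block row by $\bV^T$ kills the coupling term $\bJ_{12}\bz_i$ and yields $(\sigma_i\Er-\Ar)^{-1}\cbBr\Rdir_i=-\mathbf{e}_i$, which you match against $\bv_i$ being (minus) the first block of $(\sigma_i\bE-\bA)^{-1}\cbB\Rdir_i$; likewise you establish strict properness of $\bH(s)-\bD$ by an explicit Schur-complement asymptotic rather than by citation. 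What your approach buys is transparency and independence from the projector framework --- the identity $\bV^T\bJ_{12}=0$ is exactly the mechanism that makes the method work, and the paper never states it explicitly; what the paper's approach buys is that the projected system \eqref{37sys} is itself exhibited as a \textsf{pH} \textsf{ODE}, which situates the result inside the general \textsf{DAE} interpolation theory and extends more readily to the case $\bB_2-\bP_2\neq 0$ treated next. The structure-preservation step (congruence of the passivity matrix with $\mathsf{diag}(\bV,\bI)$) is identical in both. Two small housekeeping remarks: your explicit computation uses $\bv_i=\bV\mathbf{e}_i$, which is incompatible with the parenthetical suggestion to orthonormalize $\bV$ --- you should either keep the raw basis or appeal to the basis-independence of Galerkin interpolation; and the invertibility of $\sigma_i\Er-\Ar$ (and of the saddle-point matrix itself) is a genericity assumption that the paper also leaves implicit, so flagging it as you do is appropriate.
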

\begin{proof}
Note first that the regularity of $\lambda\bE-(\bJ-\bR)$ and the index-$2$ condition imply that $-\bJ_{12}^T\bE_{11}^{-1}\bJ_{12}$ is invertible, see \cite{KunM06,BeaMXZ18}.
Following \cite{GugSW13}, we write  \eqref{eqn:ind2case1} as
\begin{equation}   \label{37sys}
\arraycolsep=2pt
\begin{array}{rcl}
\bfpi\bE_{11}\bfpi \dot{\bx}_1(t) & = & \bfpi\bA_{11}\bfpi \bx_1(t) + \bfpi\bB_1 \bu(t), \\
 \by(t) &=& \bC_1\bfpi\bx_1(t) +\bD\bu(t),
 \end{array}
 \end{equation}
together with the algebraic equation
\[
\bx_2(t)= -(\bJ_{12}^T\bE_{11}^{-1}\bJ_{12})^{-1}\bJ_{12}^T\bE_{11}^{-1}\bA_{11}\bx_1(t)
-(\bJ_{12}^T\bE_{11}^{-1}\bJ_{12})^{-1}\bJ_{12}^T\bE_{11}^{-1}\bB_1 \bu(t),
\]
with the projector 
\[
\bfpi_  =  \bI-\bE_{11}^{-1}\bJ_{12}(\bJ_{12}^T\bE_{11}^{-1}\bJ_{12})^{-1}\bJ_{21}^T
\]
The equivalent system \eqref{37sys} is now an implicit \textsf{ODE}  \textsf{pH}  system that can be reduced with standard model reduction techniques. However, this would require computing the projector $\bfpi$explicitly, see \cite{MehS05}. For general index-$2$ \textsf{DAE} systems  one can avoid  this computational step  in interpolatory model reduction,
see \cite{GugSW13}.

To adapt this idea to  \textsf{pHDAE}  systems, we construct $\bV$ using \eqref{eqn:Vind2case} and then compute the reduced-order quantities via one-sided projection as in \eqref{redssind2case1}.  This construction of $\bV$, as in \cite{GugSW13},
guarantees that the reduced model in \eqref{eqn:romind2case}  tangentially interpolates the original  \textsf{pHDAE}  system in \eqref{37sys} and the polynomial part of the transfer function in \eqref{eqn:ind2case1} is given by $\bD=\bS+\bN$ partitioned in its symmetric and skew-symmetric part.
Since \eqref{eqn:romind2case} is an implicit \textsf{ODE} \textsf{pH}  system  with the exact $\bD$-term, it matches the polynomial part of the original transfer function $\bH(s)$.

It remains to show that \eqref{eqn:romind2case} is \textsf{pH}.
By construction in  \eqref{redssind2case1},  $\Jr$ is skew-symmetric, $\Rr$ is symmetric positive semidefinite, and $\bE_{11}$ is symmetric positive definite.  Moreover,
\[
\left[ \begin{array}{cc}
\bV^T \bR_{11} \bV & \bV^T \bP_1 \\
 \bP_1^T \bV & \bS  \end{array}
\right] =
\left[ \begin{array}{cc}
\bV^T  & 0 \\
0  & \bI \end{array}
\right]
\left[ \begin{array}{cc}
 \bR_{11}  &  \bP_1 \\
 \bP_1^T & \bS  \end{array}
\right]
\left[ \begin{array}{cc}
\bV  & 0 \\
0  & \bI \end{array}
\right]
 \geq 0,
\]
since the original model is PH, and therefore the pH-structure is retained.
\end{proof}

The situation 
becomes more complicated when the second block in $\cbB$ is nonzero, i.e., the system has the form
\begin{align}
\begin{array}{rcl}
\left[
\begin{array} {cc}
\bE_{11} & 0 \\ 0 & 0
\end{array} \right]
\left[ \begin{array}{c}
\dot{\bx}_1(t) \\
\dot{\bx}_2(t)
\end{array}
\right]
& =&
\left[\begin{array} {cc}
\bJ_{11}- \bR_{11} & \bJ_{12} \\ -\bJ_{12}^T & \mathbf{0}
\end{array} \right] \left[ \begin{array}{c}
{\bx}_1(t) \\
{\bx}_2(t)
\end{array}
\right] + \left[ \begin{array} {c}
\bB_{1}-\bP_1  \\ \bB_2-\bP_2
\end{array} \right] \bu(t),  \\[3ex] \by(t) &= & \left[ \begin{array} {cc} \bB_1^T+\bP_1^T&  \bB_2+\bP_2^T  \end{array} \right] \left[ \begin{array}{c}
{\bx}_1(t) \\
{\bx}_2(t)
\end{array}
\right]  + \bD \bu(t).
\end{array}
~\label{eqn:ind2case21}
\end{align}
We have the following theorem.
\begin{theorem}
Consider a  \textsf{pHDAE}  system of the form \eqref{eqn:ind2case21} and define the matrices
 \begin{align*}
\bZ & = -(\bA_{21}\bE_{11}^{-1}\bA_{12})^{-1} = \left(\bJ_{12}^T \bE_{11}^{-1}\bJ_{12}\right)^{-1}  \\
 \cbC & = 
  (\bB_1^T-\bP_1^T) - (\bB_2^T-\bP_2^T)\bZ \bJ_{12}^T\bE_{11}^{-1}(\bJ_{11}- \bR_{11}), \\
 \cbB &= 
 (\bB_1 -\bP_1) + (\bJ_{11}- \bR_{11})\bE_{11}^{-1}\bJ_{12} \bZ(\bB_2-\bP_2), \\
 \cbD_0 &= 
 \bD - (\bB_2^T+\bP_2^T)\bZ\bJ_{12}^T\bE_{11}^{-1}(\bB_1-\bP_1), \quad \mbox{and} \label{Dtilde} \\
  \cbD_1 &= 
  -(\bB_2^T+\bP_2^T) \bZ (\bB_2-\bP_2).
  \end{align*}
 Given interpolation points $\{\sigma_1,\sigma_2,\ldots,\sigma_r\}$ and associated  tangent directions $\{ \Rdir_1, \Rdir_2,\dots, \Rdir_r\}$, let  the vectors $\bv_i$ be the first blocks of the solutions of
\begin{equation} \label{eqn:Vind2case2}
\left[ \begin{array}{cc}\bJ_{11}- \bR_{11}-\sigma_i\bE_{11} & \bJ_{12}\\  -\bJ_{12}^T & \mathbf{0} \end{array} \right]
\left[ \begin{array}{cc} \bv_i \\  \bz \end{array} \right]=  \bB\Rdir_i,~~\mbox{for}~~i=1,2,\ldots,r,
\end{equation}
and  set $\bV := \left[\bv_1, \bv_2, \dots, \bv_r\right]$, $\bu_1:=\bu$, $\bu_2:=\dot \bu$, and
$\Dr:=\mat{cc} \cbD_0 & \cbD_1 \rix=\Sr+ \Nr$.
Then, the reduced model
\begin{equation}  \label{eqn:romind2case2}
\Er \dot\bx_r = (\Jr - \Rr) \bx_r + \mat{cc}\Br-\Pred & 0\rix \mat{c}\bu_1\\ \bu_2\rix,~~\by_r = (\Br+\Pred)^T \bx_r + \Dr \mat{c}\bu_1\\ \bu_2\rix,
\end{equation}
\begin{align}
\mbox{with}\hspace{10ex}\Er &=  \bV^T\bE_{11}\bV,~~  
\Jr  =  \bV^T \bJ_{11} \bV,~~  \Rr = \bV^T \bR_{11} \bV,\nonumber\\
\Br&=  \frac{1}{2}\left(\bV^T \cbB + \bV^T \cbC^T\right),~~\mbox{and}~~ \bPr = \frac{1}{2}\left(\bV^T \cbC^T-\bV^T \cbB\right),
\hspace{0.3in}
\label{redB}
\end{align}
satisfies the interpolation conditions, matches the polynomial part of the transfer function, and preserves the  \textsf{pH}  structure, provided that the reduced passivity matrix
$\widehat \bW=\mat{cc} \Rr & \Pred \\ \Pred^T &  \Sr\rix$
is positive semidefinite.
\end{theorem}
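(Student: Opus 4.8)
The plan is to follow the same three-part strategy used in the proof of Theorem~\ref{thm:indtwo1} and in \cite{GugSW13}, adapted to the case where the input enters the algebraic equation. First I would eliminate the algebraic variable $\bx_2(t)$ from \eqref{eqn:ind2case21}: solving the constraint $-\bJ_{12}^T\bx_1=\bB_2\bu-\bP_2\bu$ is not directly possible since $\bJ_{12}^T$ need not be square, so instead one differentiates the constraint once (this is where the index-$2$ structure and the invertibility of $\bJ_{12}^T\bE_{11}^{-1}\bJ_{12}$, guaranteed as in the proof of Theorem~\ref{thm:indtwo1}, are used) to obtain an expression for $\bx_2$ in terms of $\bx_1$, $\bu$, and $\dot\bu$. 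Substituting back, the $\bx_1$-dynamics become an implicit \textsf{ODE} driven by both $\bu_1=\bu$ and $\bu_2=\dot\bu$, with the projected coefficient matrices $\bfpi_l\bE_{11}\bfpi_r$, $\bfpi_l(\bJ_{11}-\bR_{11})\bfpi_r$ and input/output matrices that, after collecting terms, are exactly $\cbB$, $\cbC$, $\cbD_0$, $\cbD_1$ as defined in the statement; this also reproduces the polynomial part \eqref{polind2}. The key point, already exploited in \cite{GugSW13}, is that building $\bV$ from \eqref{eqn:Vind2case2} automatically lands the columns of $\bV$ in the range of the relevant projector $\bfpi_r$, so the Galerkin reduction in \eqref{redB} coincides with interpolatory reduction of the projected \textsf{ODE} system, and hence the tangential interpolation conditions $\bH(\sigma_i)\Rdir_i=\Hr(\sigma_i)\Rdir_i$ hold and the reduced model keeps the \emph{exact} $\Dr$-term, so its polynomial part matches $\bP(s)$ including the $s$-linear piece.

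Second, I would verify that \eqref{eqn:romind2case2}–\eqref{redB} is genuinely a \textsf{pHDAE} in the extended-input sense: $\Er=\bV^T\bE_{11}\bV>0$ since $\bE_{11}>0$ and $\bV$ has full column rank (generically; otherwise $\Er\ge0$ suffices), $\Jr=\bV^T\bJ_{11}\bV$ is skew-symmetric, $\Rr=\bV^T\bR_{11}\bV\ge0$, and the input/output matrices $\Br\pm\Pred$ recombine by construction to give port matrices of the required shape, with $\bPr=\tfrac12(\bV^T\cbC^T-\bV^T\cbB)$. The decomposition $\Dr=\Sr+\Nr$ into symmetric and skew-symmetric parts is purely formal. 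The passivity condition then reduces, exactly as in Theorem~\ref{ind1B2nonzerosol1} and Theorem~\ref{thm:indtwo1}, to positive semidefiniteness of $\widehat\bW=\bigl[\begin{smallmatrix}\Rr & \Pred\\ \Pred^T & \Sr\end{smallmatrix}\bigr]$, which is the hypothesis; and the skew-adjointness condition $\Jr^T=-\Jr$, $\Er=\Er^T$ holds by construction. So the \textsf{pH} axioms of Definition~\ref{pHDAEDEF} are met for the reduced system.

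The main obstacle I anticipate is the bookkeeping in the first step: unlike Theorem~\ref{thm:indtwo1}, here $\bB_2-\bP_2\ne0$ forces a genuine differentiation of the constraint, so the reduced model is driven by $\dot\bu$ and one must carefully track how the terms $(\bJ_{11}-\bR_{11})\bE_{11}^{-1}\bJ_{12}\bZ(\bB_2-\bP_2)$, $(\bB_2^T-\bP_2^T)\bZ\bJ_{12}^T\bE_{11}^{-1}(\bJ_{11}-\bR_{11})$, etc., assemble into $\cbB$, $\cbC$, $\cbD_0$, $\cbD_1$ and, crucially, why the \emph{symmetrization} in \eqref{redB} (taking $\Br=\tfrac12(\bV^T\cbB+\bV^T\cbC^T)$) is consistent — i.e., why $\bV^T\cbB$ and $\bV^T\cbC^T$ would coincide in the undissipated, $\bP$-free case so that this averaging does not destroy interpolation. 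One has to argue that the antisymmetric correction $\bV^T\cbC^T-\bV^T\cbB$ is absorbed into $\bPr$ without affecting the transfer function of the combined $(\Br+\Pred)^T(\cdot)(\Br-\Pred)$-realization, which follows because $\cbC=\cbB^T$ would hold in the lossless case and the $\bP$-blocks only shift the skew part; this is the step requiring the most care, and it mirrors the identity $\Pred=\tfrac12(-\cbGr+\cbCr)$ used in Theorem~\ref{ind1B2nonzerosol1}. The remaining verifications — interpolation, polynomial matching, and the semidefiniteness reduction — are then routine given Lemma~\ref{lem:regul}, the proof of Theorem~\ref{thm:indtwo1}, and the results of \cite{GugSW13}.
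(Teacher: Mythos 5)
Your proposal follows essentially the same route as the paper: pass to the projected implicit \textsf{ODE} driven by $\bu$ and $\dot\bu$ with coefficients $\cbB$, $\cbC$, $\cbD_0$, $\cbD_1$, build $\bV$ from the bordered system \eqref{eqn:Vind2case2} so that the one-sided Galerkin projection interpolates and matches the polynomial part without ever forming the projectors, and then recombine $\bV^T\cbB$ and $\cbC\bV$ into $\Br\pm\Pred$, reducing \textsf{pH}-ness to $\widehat\bW\geq 0$. The one step you flag as the most delicate --- the symmetrization in \eqref{redB} --- is actually a pure algebraic identity ($\Br-\Pred=\bV^T\cbB$ and $(\Br+\Pred)^T=\cbC\bV$ hold by construction for \emph{any} $\bV^T\cbB$ and $\cbC\bV$), so the realization and hence the interpolation property are untouched and no appeal to the lossless or $\bP$-free case is needed; this is exactly how the paper argues it.
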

\begin{proof}
The proof follows similar to the proof of Theorem~\ref{thm:indtwo1}.
Following \cite{GugSW13},
the state $\bx_1$ can be
decomposed as \mbox{$\bx_1 = \bx_c + \bx_g$}, where
$\bx_g = \bE_{11}^{-1}\bA_{12}(\bJ_{12}^T\bE_{11}^{-1}\bJ_{12})^{-1}(\bB_2-\bP_2)\bu(t)$
and $\bx_c(t)$ satisfies $\bJ_{12}^T\bx_c = 0$. Then,
one can rewrite  \eqref{eqn:ind2case21} as
\begin{equation}   \label{newsystem}
\arraycolsep=2pt
\begin{array}{rcl}
\bfpi\bE_{11}\bfpi \dot{\bx}_0(t) & = & \bfpi\bA_{11}\bfpi \bx_0(t) + \bfpi\bB_1 \bu(t) \\
 \by(t) &=& \bC \bfpi\bx_1(t) +\cbD_0\bu(t) +  \cbD_1\dot{\bu}(t).
 \end{array}
 \end{equation}
As before, the \textsf{ODE} part can be reduced with usual model reduction techniques.
Following  \cite{GugSW13}, however, we  achieve this without computing the projectors $\bfpi_l$ and $\bfpi_l$ explicitly, instead
    by constructing $\bV$ using \eqref{eqn:Vind2case2} and then applying one-sided model reduction with $\bV$ to obtain the reduced model
\begin{equation}  \label{introm}
\Er \dot\bx_r = (\Jr - \Rr) \bx_r + \tBr \bu(t),~~\by_r = \tCr^T \bx_r + \cbD_0 \bu(t) +
\cbD_1 \dot \bu(t),
\end{equation}
where
$\Er =  \bV^T\bE_{11}\bV$,
$\Jr =  \bV^T \bJ_{11} \bV$, $\Rr =  \bV^T \bR_{11} \bV$,
$\tBr= \bV^T \cbB$,  and $\tCr=  \cbC \bV$.
This reduced model, by construction, satisfies the tangential interpolation conditions.
Note that the reduced model in \eqref{introm} has exactly the same realization as the reduced model in \eqref{eqn:romind2case2}  except for  the reduced $\tBr$ and $\tCr$ terms.
The reduced terms in \eqref{introm} already have the desired  \textsf{pH}  structure.  To recover
the  symmetry in  \eqref{newsystem}, we deternmine matrices $\Br$ and $\bPr$ such that
$\tBr = \bV^T \cbB = \Br - \bPr$ and $\tCr = \cbC \bV = (\Br + \bPr)^T$ via
%
%
$$ \Br =  \frac{1}{2}\left(\tBr + \tCr^T\right) =  \frac{1}{2} \bV^T \left(\cbB +\cbC^T\right) \quad \mbox{and} \quad
\bPr =  \frac{1}{2}\left(\tCr^T - \tBr\right) =  \frac{1}{2} \bV^T \left(\cbC^T -  \cbB \right),$$
recovering \eqref{redB}.
The final requirement to retain the  \textsf{pH}  structure is, then, again that
$\left[
\begin{array}{cc}
\Rr & \bPr \\
\bPr^T & \frac{1}{2}\left( \bD + \bD^T \right)
\end{array}
\right] \geq 0$,
which is the final condition in the statement of the theorem.
\end{proof}

This approach has the disadvantage that one has to introduce the derivative of $\bu$ as an extra input, which may lead to difficulties  when applying standard control and optimization methods. For this reason it is usually preferable to first perform an index reduction 
via an appropriate output feedback, see Section~\ref{sec:regularization}, 
and then apply the results from Section~\ref{sec:indexone}.
But note that this changes the polynomial part of the transfer function.
\subsection{Semi-explicit  \textsf{pHDAE}  systems with index-$1$ and index-$2$ constraints}
\label{sec:index12}
Finally we consider semi-explicit index-$2$
systems which also have an index-$1$ part, \cite{
EggKLMM18,
HauMM19_ppt,MehS05}. In this case we only consider the special case with state
\begin{equation}
 \mat{ccc} \bE_{11} & \bE_{22} & 0 \\
 \bE_{21} & \bE_{22} & 0 \\ 0 & 0 & 0 \rix \mat{c} \dot \bx_1 \\ \dot \bx_2 \\ \dot \bx_3 \rix  =
 \mat{ccc} \bJ_{11}-\bR_{11} &  \bJ_{12}-\bR_{12}& \bJ_{13} \\
 \bJ_{21}-\bR_{21} & \bJ_{22}-\bR_{22} & 0 \\ \bJ_{31} & 0 & 0  \rix\mat{c} \bx_1 \\ \bx_2 \\ \bx_3\rix
 \label{index12state}
 + \mat{c} \bB_1-\bP_1 \\ \bB_2-\bP_2 \\ 0\rix \bu,
 \end{equation}
 output $
 \by=  \mat{ccc} (\bB_1+\bP_1)^T & (\bB_2+\bP_2)^T  & 0\rix\mat{c} \bx_1 \\ \bx_2 \\ \bx_3 \rix
  +(\bS+\bN)\bu$,
 $ \mat{cc} \bE_{11} & \bE_{22} \\
 \bE_{21} & \bE_{22}\rix>0$, $\bJ_{22} -\bR_{22}$, and $\bJ_{31}$ are nonsingular.
\begin{theorem}\label{thm:indtwoindone}
Consider an index-$2$  \textsf{pHDAE}  system of the form (\ref{index12state}). 
Construct an interpolatory model reduction basis
\begin{equation} \label{Vind1ind2}
\bV = \begin{bmatrix} \bV_1^T & \bV_2^T & \bV_3^T \end{bmatrix}^T =  \left[(\sigma_1 \bE - \bA)^{-1}\cbB \Rdir_1,~~\cdots~(\sigma_r \bE - \bA)^{-1}\cbB \Rdir_r\right] \in \IC^{n \times r},
\end{equation}
 partitioned as the system.
Define
$\widehat{\bV} := \textsf{diag}(\bI,\bV_2,\bI)$.
Then  the reduced system
\begin{equation}  \label{eqn:romind12case}
\Er \dot\bx_r = (\Jr - \Rr) \bx_r + \cbBr \bu(t),~~\by_r = \cbCr \bx_r + \Dr,
\end{equation}
%
%
\begin{align}
\mbox{with}~~ \Er &=  \widehat \bV^T\bE\widehat \bV,~~ \nonumber
\Jr  =  \widehat \bV^T \bJ \widehat \bV,~~ \Rr =  \widehat \bV^T \bR \widehat \bV, \\
\cbBr&= \widehat \bV^T\cbB   = \widehat \bV^T \bB - \widehat \bV^T \bP,~~\cbCr= \bC \widehat \bV = \bB^T \widehat \bV^T + \bP^T \widehat \bV^T,~~~~\mbox{and}~~~\Dr = \bD,
\label{redssind2ind2}
\end{align}
is still \textsf{pH}, matches the polynomial part of the transfer function, and satisfies the tangential interpolation conditions, i.e.,
%
$\bH(\sigma_i) \Rdir_i = \Hr(\sigma_i) \Rdir_i$, for $i=1,2,\ldots,r$.
%
\end{theorem}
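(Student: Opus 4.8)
The plan is to mirror the argument used for Theorem~\ref{ind1B2nonzerosol2}: the matrix $\widehat{\bV}=\textsf{diag}(\bI,\bV_2,\bI)$ defines a one-sided (Galerkin-type) projection whose range \emph{contains} $\mathsf{span}(\bV)$, so that tangential interpolation is inherited for free, the \textsf{pH} structure is retained because the reduction acts by congruence, and the polynomial part is unchanged because the blocks that generate it are left untouched.

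First I would establish the containment $\mathsf{span}(\bV)\subseteq\mathsf{span}(\widehat{\bV})$. Writing a column of $\bV$ as $\mat{ccc}\bv^{(1)T} & \bv^{(2)T} & \bv^{(3)T}\rix^T$, by construction $\bv^{(2)}\in\mathsf{span}(\bV_2)$, say $\bv^{(2)}=\bV_2\bc$, while the first and third blocks of $\widehat{\bV}$ are identities; hence that column equals $\widehat{\bV}\,\mat{ccc}\bv^{(1)T} & \bc^T & \bv^{(3)T}\rix^T\in\mathsf{span}(\widehat{\bV})$. Consequently, \eqref{eqn:romind12case} is precisely the Galerkin reduction of \eqref{index12state}--\eqref{index12out} onto $\mathsf{span}(\widehat{\bV})$; since this subspace contains each vector $(\sigma_i\bE-\bA)^{-1}\cbB\Rdir_i$, the standard interpolation property of (Petrov--)Galerkin projection \cite{gallivan2005model,BG2017siam,AntBG20} yields $\bH(\sigma_i)\Rdir_i=\Hr(\sigma_i)\Rdir_i$ for $i=1,\dots,r$, exactly as in the proof of Theorem~\ref{ind1B2nonzerosol2}.

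Next I would check the \textsf{pH} structure. Because $\widehat{\bV}$ appears on both sides in \eqref{redssind2ind2}, $\Jr=\widehat{\bV}^T\bJ\widehat{\bV}$ is skew-symmetric and $\Er=\widehat{\bV}^T\bE\widehat{\bV}$ is symmetric positive semidefinite. Setting $\Br:=\widehat{\bV}^T\bB$ and $\Pred:=\widehat{\bV}^T\bP$ (so that $\cbBr=\Br-\Pred$ and $\cbCr=(\Br+\Pred)^T$), the reduced passivity matrix is the congruence $\widehat{\bW}=\mat{cc}\Rr & \Pred\\ \Pred^T & \Sr\rix=\mat{cc}\widehat{\bV}^T & 0\\ 0 & \bI\rix\bW\mat{cc}\widehat{\bV} & 0\\ 0 & \bI\rix\geq 0$, so all three conditions of Definition~\ref{pHDAEDEF} hold. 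I would also confirm that the reduced pencil is regular with index at most two: the $(3,1)$ block of $\Jr-\Rr$ is still $\bJ_{31}$ and the $(2,3)$ block is still zero, while the leading $2\times 2$ block of $\Er$ equals the congruence $\textsf{diag}(\bI,\bV_2)^T\mat{cc}\bE_{11} & \bE_{12}\\ \bE_{21} & \bE_{22}\rix\textsf{diag}(\bI,\bV_2)>0$ (using that $\bV_2$ has full column rank); hence the reduced system carries the same index-$2$ constraint $\bx_1=0$ and the same semi-explicit index-$1$/index-$2$ pattern as the original.

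Finally, for the polynomial part I would substitute the index-$2$ constraint $\bx_1\equiv 0$, hence also $\dot{\bx}_1\equiv 0$, into both realizations. In the full model this leaves the ordinary differential equation $\bE_{22}\dot{\bx}_2=(\bJ_{22}-\bR_{22})\bx_2+(\bB_2-\bP_2)\bu$ decoupled from $\bx_3$, together with $\bx_3$ determined algebraically from the first block row; since $\bx_3$ does not enter the output and $\bE_{22}>0$, the transfer function is strictly proper plus $\bD$, so its polynomial part is $\bD=\bS+\bN$. The identical elimination applied to \eqref{eqn:romind12case}, now with $\bV_2^T\bE_{22}\bV_2>0$ in the role of $\bE_{22}$ and the $(2,3)$ and $(3,1)$ blocks unchanged, gives reduced polynomial part $\Dr=\bD$, and the two coincide. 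I expect this last step to be the main obstacle: it is where the specific choice $\widehat{\bV}=\textsf{diag}(\bI,\bV_2,\bI)$ is essential---reducing the $\bx_1$ or $\bx_3$ blocks would destroy the constraint $\bx_1=0$ or the solvability for $\bx_3$---and it requires some care to verify that no hidden algebraic-variable-to-output path survives in either model and that the reduced pencil really does have the claimed index structure.
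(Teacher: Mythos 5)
Your proposal is correct and follows essentially the same route as the paper's proof: establish $\mathsf{span}(\bV)\subseteq\mathsf{span}(\widehat{\bV})$ to inherit the tangential interpolation conditions, note that the one-sided congruence with $\widehat{\bV}$ preserves the \textsf{pH} structure, and observe that the untouched constraint blocks leave the polynomial part equal to $\bD$. The paper states these three points tersely, whereas you additionally spell out the column-wise containment argument, the elimination of the index-$2$ constraint $\bx_1=0$, and the (implicitly needed) full-column-rank condition on $\bV_2$ — all of which are consistent refinements rather than deviations.
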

\begin{proof}
It follows from the definitions of $\bV$ and $\widehat{\bV}$ that $\mathsf{span}(\bV) \subseteq \mathsf{span}(\widehat{\bV})$. Therefore, the resulting  reduced system  automatically satisfies the interpolation conditions. Since $ \widehat{\bV}$ does not alter the algebraic constraints, the  polynomial part of its transfer function  is still
$ \bD$, matching that of the original model.
The reduced system in \eqref{redssind2ind2} is a \textsf{pHDAE}  as this one-sided projection retains the original  \textsf{pH}  structure.
\end{proof}

\section{Numerical Eperiments}
\label{sec:imp}
The preceding analysis presumed that interpolation points and tangent directions were specified beforehand.
We consider now how one might make choices that generally produce effective approximations with respect to the $\H2$ system measure.

{\bf $\H2$-inspired structure-preserving interpolation:}
between the full model $\bH(s)$ and the reduced model $\bH_r(s)$:
\begin{equation*}
\big\| \bH  - \bH_r\big\|_{\H2} = \left( \frac{1}{2\pi} \int_{-\infty}^\infty \big\| \bH(\imath \omega) - \bH_r(\imath \omega)\big\|_F^2 d\omega \right)^{1/2},
\end{equation*}
where $\imath^2 = -1$ and $\| \bM \|_F$ denote the Frobenius norm of a matrix $\bM$. To have a finite $\H2$ error norm the polynomial parts of $\bH_r(s)$ and $\bH(s)$ need to match. To make this precise, write $\bH(s)=\bG(s) + \bP(s)$ and $\bH_r(s)=\bG_r(s) + \bP_r(s)$ as
where $\bG(s)$ and $\bG_r(s)$ are strictly proper transfer functions, and  $\bP(s)$ and $\bP_r(s)$ are the polynomial parts. Therefore, if $\bH_r(s)$ is the $\H2$-optimal approximation to
$\bH(s)$, then $\bP_r(s) = \bP(s)$ and $\bG_r(s)$ is the  $\H2$-optimal approximation to
$\bG(s)$.  This suggests that one decomposes $\bH(s)$ into its rational and polynomial parts $\bH(s) = \bG(s) + \bP(s)$ and then applies $\H2$ optimal reduction to $\bG(s)$. However, this requires the explicit construction of $\bG(s)$. This problem was resolved in \cite{GugSW13} for \emph{unstructured} index-$1$ and index-$2$ \textsf{DAE}s. On the other hand, for the \textsf{ODE} case,  \cite{GugPBS12} proposed a  \textsf{pH} structure preserving algorithm for minimizing the $\H2$ norm. In this section, we aim to unify these two approaches.

First, we briefly revisit the interpolatory $\H2$ optimality conditions; for details we refer the reader to \cite{GugAB08,BG2017siam,AntBG20} and the references therein.  Since  $\H2$ optimality for the \textsf{DAE} case boils down to optimality for the \textsf{ODE} part, we focus on the latter. Let
$\bG_r(s) = \sum_{i=1}^r \frac{\bc_i \bb_i^T}{s-\lambda_i}$
be the pole-residue decomposition of $\bG_r(s)$. For simplicity we assume simple poles. If $\bG_r(s)$ is an $\H2$ optimal approximation to $\bG(s)$, then
$\bG(-\lambda_i) \bb_i  = \bG_r(-\lambda_i) \bb_i$, $
 \bc_i^T \bG(-\lambda_i) =  \bc_i^T  \bG_r(-\lambda_i)$, and $
 \bc_i^T \bG'(-\lambda_i) \bb_i  = \bc_i^T  \bG'_r(-\lambda_i) \bb_i$.
for $i=1,2,\ldots,r$ where $'$ denotes the derivate with respect to $s$. Therefore an $\H2$ optimal reduced model is a bitangential Hermite interpolant where the interpolation points are the mirror images of its poles and the tangent directions  are the residue directions. Since the optimality conditions depend on the reduced model to be computed, this requires an iterative algorithm, which 
the Iterative Rational Krylov Algorithm \cite{GugAB08}. 
For other  approaches in $\H2$ optimal approximation, see, e.g., \cite{AntBG20,BG2017siam}.

Following \cite{GugPBS12}, in order to preserve the structure, we will satisfy only a subset of these conditions. We will make sure that the interpolation points will be the mirror images of the reduced order poles and enforce  either left or right-tangential interpolation conditions without  the derivate conditions. However, intuitively, one might expect that in the  \textsf{pH}  setting this may not cause too much deviation from true optimality, since this
a Galerkin projection, i.e., $\bZ = \bV$.

Consider the semi-explicit index-$2$ case. 
Starting with an initial selection of  interpolation points $\{\sigma_1,\sigma_2,\ldots,\sigma_r\}$ and the tangent directions $\{ \Rdir_1, \Rdir_2,\dots, \Rdir_r\}$, construct the interpolatory reduced \textsf{pHDAE} $\Hr(s)$ as in Theorem \ref{thm:indtwo1}. Let
$\Hr(s) = \sum_{i=1}^r \frac{\widehat{\Ldir}_i \widehat{\Rdir}_i^T}{s-\lambda_i} + \bD $ be the pole-residue decomposition. Since initially the optimality condition $\sigma_i = -\lambda_i(\Jr - \Rr,\Er)$ is not (generally) satisfied, choose $-\lambda_i(\Jr - \Rr,\Er)$ as the next set of interpolation points and $\widehat{\Rdir}_i$ as the next set of tangent directions. This is repeated until convergence upon which the reduced model $\bH_r(s)$ is not only a structure-preserving  \textsf{pHDAE}, but also satisfies
$\sigma_i = -\lambda_i(\Jr - \Rr,\Er)$.

{\bf Numerical Example:}
We illustrate the discussed procedure 
with 
the incompressible fluid flow model of the  Oseen equations, from \cite[\S 4.1]{HauMM19_ppt}
\begin{align*}
\begin{array}{rclcrclr}
\partial_t v & = & -(a \cdot \nabla)v + \mu \Delta v - \nabla p + f  & ~\mbox{in} ~\Omega \times (0,T],~\quad~
&v & = &0, & \mbox{on}~ \partial \Omega \times (0,T], \\
0 & = & -\mbox{div} v, & ~\mbox{in}~ \Omega \times (0,T], ~\quad~&v & = &v^0, & \mbox{in}~ \Omega \times {0},
\end{array}
\end{align*}
where $v$ and $p$ are the velocity and pressure variables, $\mu>0$ is the viscosity, and
$\Omega = (0,1)^2$ with boundary $\partial \Omega$.  $f$ is an externally imposed body force that for simplicity is assumed to be separable: $f(x,t)=b(x)u(t)$.
 A finite-difference discretization on a staggered rectangular grid
 leads to a single-input/single-output index-$2$ \textsf{pHDAE}  of the form \eqref{eqn:ind2case1}, see \cite{HauMM19_ppt}. In our model, we used
 a uniform grid with $50$ grid points yielding a descriptor system\textsf{pHDAE}  of order $n=7399$, of which  $n_1=4900$ degrees of freedom are for velocity and $n_2=2499$ for pressure. We initialized our approach 
 to reduce the order of the tranfer function to $r=1,2,\ldots,10$ with logarithmically spaced  interpolation points in the interval $[10^{-2},10^4]$. For every $r$ value, we compute
 $\| \bH - \Hr\|_\infty/ \| \bH\|_\infty$ where $\| \bH\|_\infty = \sup_{\omega \in \IR} \mid \bH(\imath \omega)\mid$. The results in Figure \ref{fig:phDAEexmp} show that the reduced transfer function accurately approximates the original one; for the  reduction
 to order $r=10$, the relative error is around $10^{-5}$, illustrating the success of the proposed framework.

\begin{figure}
\centering
\begin{tabular}{cc}
 {\includegraphics[width=5.75cm]{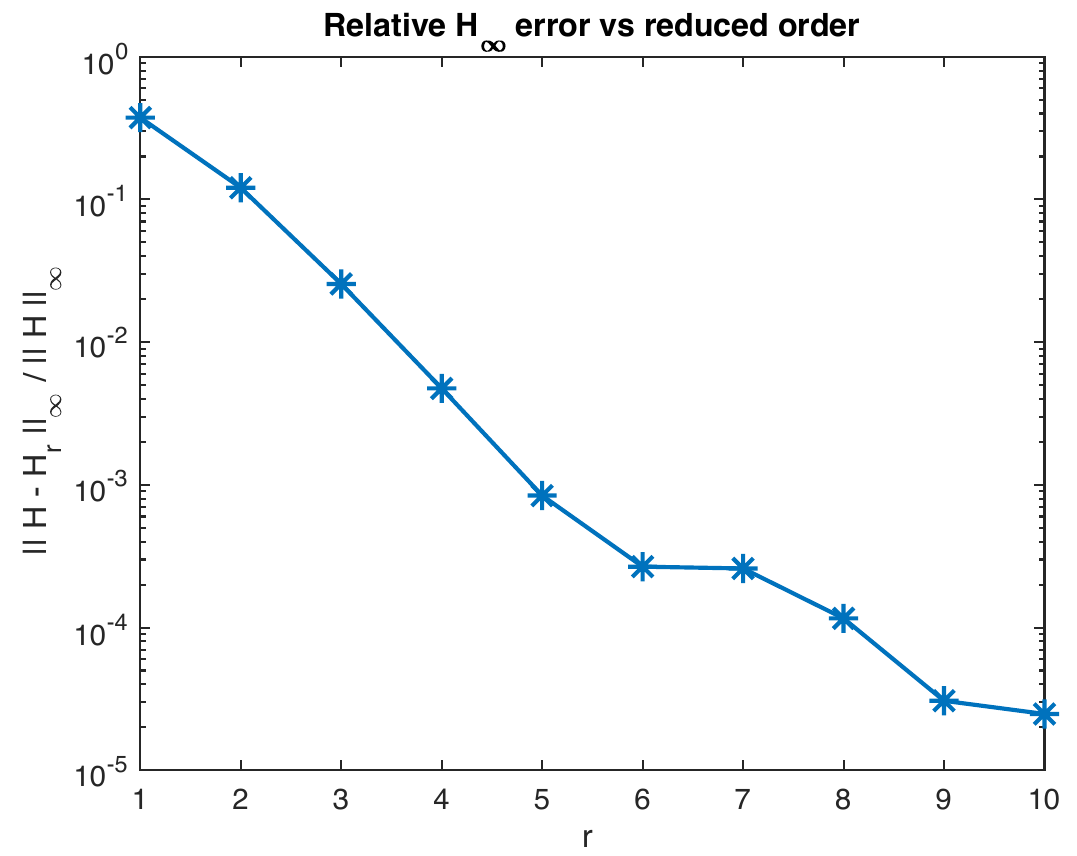}}
\end{tabular}
\caption{Evolution of the model reduction error for Oseen example as $r$ varies}
\label{fig:phDAEexmp}
\end{figure}
\section{Conclusion}

We have presented interpolatory model reduction methods for several classes of large scale linear time-invariant port-Hamiltonian differential-algebraic systems.  We have shown how constraints are  represented in the transfer function so that the polynomial part can be preserved with interpolatory methods. We have  illustrated the results with numerical example from flow control.

\textbf{Acknowledgment}
The works of Beattie and Gugercin were supported in parts by NSF through Grant DMS-1819110. The work of Mehrmann was supported by Deutsche Forschungsgemeinschaft  through  CRC 910 within project A02. Parts of this work were completed while Beattie and Gugercin visited TU Berlin, for which Gugercin acknowledges support through  CRC 910 and Beattie acknowledges support through  CRC TRR 154 as well as DFG Research Training Group DAEDALUS.
\bibliographystyle{plain}
\bibliography{PHDAEPaper}
\end{document}